\documentclass[reqno,11pt]{amsart}
\usepackage[utf8]{inputenc}
\usepackage[T1]{fontenc}
\usepackage{geometry}
\usepackage{graphicx} % Required for inserting images
\usepackage{amsmath}
\usepackage{mathtools}
\usepackage{amsthm}
\usepackage{amssymb}
\usepackage{hyperref}
\usepackage{stmaryrd}

\newcommand{\norm}[1]{\left\lVert#1\right\rVert}

\usepackage{amsfonts}
\usepackage{xcolor}

\keywords{Fisher information, relative entropy, graphons, non exchangeable particle systems, mean field interaction}

\subjclass[2020]{65C35; 35K55; 65C05; 82C20; 26D10; 60E15}

\author{Jules~Grass}
\address{Université Claude Bernard Lyon 1, CNRS UMR 5208, Institut Camille Jordan, 69622 Villeurbanne, France. \url{grass@math.univ-lyon1.fr}}

\title{Quantitative large population limit for non exchangeable diffusions in Fisher information}
\newtheorem{theorem}{Theorem}[section]
\newtheorem{lemma}[theorem]{Lemma}

\newtheorem{hyp}[theorem]{Hypothesis}

\newtheorem{rem}[theorem]{Remark}

\begin{document}
\maketitle 
\begin{abstract}
This paper builds upon the methods developed in \cite{lacker_graphe} and \cite{fisher} to investigate the large population behavior of  non exchangeable systems of $N$ diffusive particles when the interaction matrix converges (in some sense) to a graphon. We first prove that the particle system is well approximated in Fisher information by the so-called independent projection system by proving quantitative bounds on the relative Fisher information between the marginal laws of both systems. We then use a convenient equivalence between the independent projection system and a graphon mean field system to investigate its large population behavior by proving quantitative stability estimates for graphon mean field systems in both relative entropy and Fisher information.
\end{abstract}
\section{Introduction} 
In this work, we study the large population limit of a class of interacting particles on  $\mathbb{R}^{d}$ (or $\mathbb{T}^{d}$) of the form
\begin{equation}
 d X_{t}^{i,N}=\sum_{j=1}^{N} \xi_{i,j}^{N} b\left(X_{t}^{i,N}-X_{t}^{j,N} \right)  d t+ d B_{t}^{i,N} \label{eq_main},
\end{equation}
where 
\begin{enumerate}
    \item[$\bullet$] b is smooth, bounded with bounded derivatives,
    \item[$\bullet$] $(\xi_{i,j}^{N})_{1 \leq i,j\leq N}$ is an interaction matrix satisfying $\displaystyle \max_{i \in [N]} \sum_{j=1}^{N} \xi_{i,j}^{N} \leq 1$ and $\xi_{i,j}^{N} \geq 0$ for all $(i,j)\in [N]^{2}$.
\end{enumerate}
The classical mean field case, where $\xi_{i,j}^{N}=\frac{1}{N}$, dates back to the birth of statistical mechanics and the works of Boltzmann, Vlasov, McKean and other, and it has been heavily studied by the mathematical community since the middle of the 20th century (see \cite{Chaintron_1} and \cite{Chaintron_2} for a more detailed review of the literature). While this framework has been used in other fields, including biological systems, collective behaviours of animals or neural networks (see \cite{Muntean2014}, \cite{Naldi2010} or \cite{Sirignano2019}) systems of the form \eqref{eq_main} have been used to model more intricate behaviors in social science or economics, due to the inherent inhomogeneous nature of the interactions (see \cite{Contucci2008}, \cite{Collet2014}, \cite{Budhiraja2015} or \cite{Nadtochiy2019}) . Since we are interested in understanding the large population limit of \eqref{eq_main}, a key property in the mean field case is the propagation of chaos, that is, for $k$ fixed and $N$ large, if the initial distribution of $X_{0}^{1,N}, \cdots, X_{0}^{k,N}$ is close to being tensorised then the law of  $(X^{1,N}_t,\ldots X^{k,N}_t)$ is close to the law $\bar  \mu_{t}^{\otimes k}$ of $k$ independent processes $(\bar X^{1}_t,\ldots \bar X^{k}_t)$, with $\bar X^i_t$ solutions to
\begin{equation*}
    { d} \bar X^{i}_t = b*\bar \mu_t (\bar X^i_t){ d}t+ { d}B_t,
\end{equation*}
where $B$ is a Brownian motion.
The non exchangeability nature of the system and the presence of the interaction matrix can produce a richer collection of behaviours compared to the mean field case, because it takes into consideration the large $N$ behaviour of the matrix $(\xi_{i,j}^{N})_{i,j}$. In particular, even if the initial particles of the system are independent, one cannot reasonably expect the same property to hold at a given time t if the matrix is sparse, because the interaction between the particles becomes very important. On the other hand, if the matrix is very close to the uniform case (for instance, in the case of sufficiently dense Erdös-Renyi graphs \cite{BHAMIDI20192174}, \cite{Delattre2016} \cite{Oliveira2019}), we can expect the propagation of chaos property to hold and that the macroscopic behaviour of the system \eqref{eq_main} is given by the McKean-Vlasov equation, i.e
\[dX_{t}=b\ast \mu_{t}(X_{t})dt+dB_{t}, \quad \text{with} \quad \mu_{t}=\text{Law}(X_{t}). \]
More recently, the study of systems with dense interaction graphs (that is, graphs with order of $n^{2}$ edges) has seen new developments with the use of the rich theory of graphons (see \cite{lovasz2012}, \cite{lovasz2004}). A graphon is defined as a symmmetric measurable function $G:[0,1]^{2} \to [0,1]$. We endow the space of all graphons $\mathcal{G}$ with the so called cut norm 
\[\norm{G}_{\square}=\sup_{A,B \in \mathcal{B}(\mathbb{R})} \left \lvert \int_{A\times B} G(u,v) dudv \right \rvert. \]
We can represent the interaction matrix $\xi_{i,j}^{N}$ by a piecewise constant graphon $G_{N}$ such that 
\begin{align} \label{def_graphon} \forall (u,v) \in \left[\frac{i}{N}, \frac{i+1}{N} \right[ \times \left[\frac{j}{N}, \frac{j+1}{N} \right[, \ \xi_{i,j}^{N}=\frac{G_{N}(u,v)}{N}.
\end{align}
This implies a notion of convergence for $\xi_{i,j}^{N}$, that is, the convergence in cut metric of $G_{N}$ to a graphon $G$.
 Graphon mean field systems have since been introduced ( notably in \cite{bayraktar2022graphon}) to provide a natural notion of limit for system \eqref{eq_main}. Provided that $G_{N} \to G$, we can expect the system \eqref{eq_main} to be close to the system of independent particles 
\begin{align} \label{eq_graphon_main}
\forall u \in [0,1], \ dX_{u}(t)=\int_{0}^{1} \int_{\mathbb{R}^{d}} b(X_{u}(s),x) G(u,v) P_{t}^{v}(dx)dvdt+  dB_{u}(t),
\end{align}
where $P_{t}^{v}=\text{Law}(X_{u}(t))$.
Existence, uniqueness and measurability for solutions poses significant challenges because of the regularity in $u$. In particular, there cannot exist a measurable family $(B^{u})_{u \in [0,1]}$ of independent Brownian motions. The system can still be defined through the use of rich Fubini extensions, see \cite{nonlinear_graphon} for a general study of existence and uniqueness. Several methods have been devised in order to prove that \eqref{eq_main} converges to \eqref{eq_graphon_main}, such as concentration of measure (\cite{Bayraktar_2024}) or laws of large numbers (\cite{bayraktar2022graphon}, \cite{bayraktar2021graphon}, \cite{Bet_2023}).
See also \cite{Jabin_2024} for an important recent work on graphon mean field systems. Since entropy related methods have garnered a lot of attention on the mean field case since \cite{Jabin_Wang}, due to its ability to handle singular interactions or to obtain sharp propagation of chaos rate (since \cite{lacker}), we investigate in this work the rate of convergence of \eqref{eq_main} to \eqref{eq_graphon_main} in relative entropy and Fisher information. A recent work that is very relevant to our case has been done in \cite{lacker_graphe}. They adapted Lacker's seminal BBGKY method developed in \cite{lacker} and \cite{lacker_uniforme}, usually used to prove sharp rates of propagation of chaos, in the context of non exchangeable diffusions. One important feature of their work is its non asymptotic nature, because they compare system \eqref{eq_main} to the so-called independent projection 
\begin{equation} \label{ind_proj}
dY_{t}^{i,N}=\sum_{j=1}^{N} \xi_{i,j}^{N} \left \langle b(X_{t}^{i,N}-\cdot), Q_{t}^{j,N} \right \rangle dt + dB_{t}^{i,N}, \qquad Q_{t}^{i,N}=\text{Law}(Y_{t}^{j,N}),
\end{equation}
where, if $\mu$ is a measure and $f$ an integrable function with respect to $\mu$, we denote by $ \displaystyle \left \langle f, \mu \right \rangle \coloneqq \int f d\mu$. 
For the sake of simplicity, we will denote $[N]=\llbracket 1,N\rrbracket$. They managed to prove optimal bounds (under the assumptions that $b$ is bounded and that the row sums of $\xi$ are bounded) for $H_{t}^{v}\coloneqq H(P_{t}^{v,N}|Q_{t}^{v,N})$, if $v$ is a subset of $[N]$ and 
\[ P_{t}^{v,N}=\text{Law}(X_{t}^{i,N}, \  i \in v), \qquad  Q_{t}^{v,N}=\text{Law}(Y_{t}^{i,N}, \  i \in v). \]
Since it is relevant to our work let us do a quick overview of the literature on the BBGKY hierarchy method. Lacker's original work \cite{lacker} focuses on the exchangeable case $\xi_{i,j}^{N}=\frac{1}{N}$ and relies on the use of the BBGKY hierarchy to obtain an estimate on $H_{t}^{k}=H(P_{t}^{[k],N}|Q_{t}^{[k],N})$ of the form 
\begin{align*}
\frac{d}{dt} H_{t}^{k} \leq C \frac{k^{2}}{N^{2}}+C k \left(H_{t}^{k+1}-H_{t}^{k} \right).
\end{align*}
It is then possible to show that $H_{t}^{k}= \mathcal{O} \left(\frac{k^{2}}{N^{2}} \right)$ by using Gronwall's Lemma.
It was adapted to the case of non exchangeable diffusions by using a generalisation of the BBGKY hierarchy, where $P_{t}^{v,N}$ depends on $P_{t}^{v \cup \{j \},N}$ for all $j \notin v$ and by obtaining a system of differential inequalities of the form
\[\frac{d}{dt} H_{t}^{v} \leq \mathcal{A} H_{t}^{v}+C(v), \]
where $\mathcal{A}$ is the infinitesimal generator of a Markov chain and $C$ is a constant that depends on the coefficients $\xi_{i,j}^{N}$.
The BBGKY hierarchy method has been extended in several other directions, most notably in mean field theory to prove sharp rates of propagation of chaos. The authors of \cite{lacker_uniforme} proved uniform in time results, by using log-Sobolev inequalities to gain an additional term $-c H_{t}^{k}$ on the right hand side of the system of differential inequalities. Cases of singular interactions and non constant diffusion coefficients have been treated respectively in \cite{Vortex} and \cite{article_diffusion} by considering more general systems involving $I_{t}^{k}=I\left(P_{t}^{[k],N}|Q_{t}^{[k],N}\right)$ (where $I$ is the Fisher information) of the form
\begin{align*}
\frac{d}{dt} H_{t}^{k} \leq -c_{1} I_{t}^{k}+c_{2} I_{t}^{k+1}+C\frac{k^{2}}{N^{2}}+ C \ k \left(H_{t}^{k+1}-H_{t}^{k} \right),
\end{align*}
where $c_{1}>c_{2}$. The Fisher information plays an important role in kinetic theory, notably because it is linked with entropy production via de Bruijin type relations (see \cite{Vil25} for a recent survey), and investigating its large $N$ behavior proves to be challenging. A relevant work by the author was recently made in \cite{fisher}, proving propagation of chaos in Fisher information for the first time for a class of mean field systems with smooth interaction by adapting the BBGKY hierarchy method and using fine estimates on the Hessian of the logarithm of the non linear equation proved in \cite{bdd_hess} to obtain a system of differential equations of the form
\[\frac{d}{dt}I_{t}^{k}  \leq C \frac{k^{2}}{N^{2}}+Ck\left(H_{t}^{k+1}-H_{t}^{k} \right)+Ck\left(I_{t}^{k+1}-I_{t}^{k} \right)+C \ I_{t}^{k}.\]
Using the estimates of \cite{lacker} on systems of differential inequalities, they proved the bound $I_{t}^{k}=\mathcal{O}\left( \frac{k^{2}}{N^{2}} \right)$ and showed that it was optimal on a simple Gaussian example. In this work, we prove in Theorem \ref{th:main} similar bounds on Fisher's information in the context of non exchangeable diffusions. These bounds are non asymptotic and do not take into consideration any notion of a limit for the interaction matrix but they show that, if the graph is sufficiently dense, system \eqref{eq_main} is close to the independent projection.

Taking advantage of the independence, we then investigate the large population behavior of the independent projection system when the interaction converges to a graphon and use its very convenient equivalence to a graphon mean field system (in a very similar way as in \cite{Jabin_2024}). 
 Notice that, if $G_{N}$ is defined as \eqref{def_graphon} then if we define
\begin{align*}
\left\{
            \begin{array}{ll}
              dX_{u}^{N}(t)=\int_{0}^{1} \int_{\mathbb{R}^{d}} b(X_{u}^{N}(s),x) G_{N}(u,v) P_{t}^{v,N}(dx)dvdt+  dB_{u}(t),  \\ 
             P_{t}^{v,N}=\text{Law}(X_{u}^{N}(t)),
            \end{array}
            \right.
\end{align*}
the law of $X_{u}^{N}(t)$ is constant in $u \in [\frac{i}{N}, \frac{i+1}{N}[$, and is equal to the law of $Y_{t}^{i,N}$ (note that the existence of a solution to this system has been proven in \cite{nonlinear_graphon} for instance). Therefore, if we assume that $G_{N} \to G$, we can naturally expect that the independent projection will be close to the system 
\begin{align*}
\left\{
            \begin{array}{ll}
              dX_{u}(t)=\int_{0}^{1} \int_{\mathbb{R}^{d}} b(X_{u}(s),x) G(u,v) P_{t}^{v}(dx)dvdt+  dB_{u}(t), \\ 
              P_{t}^{v}=\text{Law}(X_{u}(t)).
            \end{array}
            \right.
\end{align*}
Proving the convergence then amounts to proving stability estimates for graphon mean field systems.
We prove in Theorem \ref{ent_graphon} and \ref{fisher_graphon} that we can control the relative entropy and Fisher information between the marginal laws of two graphon mean field systems (under conditions on the interaction function and initial condition) with underlying graphons $G_{1}$ and $G_{2}$ by $C d(G_{1},G_{2})^{2}$, where $d$ is a distance that appears naturally in our computations.

The plan of the paper is the following. Section \ref{sec_2} presents our main results with our bounds on the propagation of independence in Fisher information, as well as our stability estimates for graphon mean field systems in both relative entropy and Fisher information. Section \ref{sec_3} is dedicated to the proof of Theorem \ref{th:main} and Sections \ref{sec_4} and \ref{stabilite} are dedicated to the proof of our stability estimates in relative entropy and Fisher information respectively. In order to prove our results on Fisher information, we rely on both the fact that the law of the graphon mean field system is a strong solution of a PDE (proven in Section \ref{sol_forte}) and a fine control on the Hessian of the law of a graphon mean field system, which is stated and proven in Section \ref{sec_7}.

\section{Main results} \label{sec_2}
Let us first recall the definition of the relative entropy and Fisher information : for any probability measures $\mu$ and $\nu$ such that $\mu$ admits $f$ as density with respect to $\nu$, we define
\begin{equation*}
    H(\mu|\nu) = \int \nu f \log f,
\end{equation*}
and
\begin{equation*}
    I(\mu|\nu) = \int \nu f \Vert \nabla \log f\Vert^2 = \int \nu \frac{\Vert \nabla f\Vert^2}{f}.
\end{equation*}
We will make the following hypotheses for the initial conditions throughout the paper.
\begin{hyp}
\label{hyp-princ}
A couple of density of probability measures (with respect to the Lebesgue measure) $(P,Q)$ is said to satisfy Hyothesis 2.1 if
\begin{itemize}
    \item Torus Case.  $Q$ is $C^2$, bounded from below.
    \item $\mathbb{R}^d$ Case. We suppose initial Gaussian bounds. More precisely that there exist constants $c$, $C$, $c'$, $C'$  such that
    \[
    C^{-1}e^{-c^{-1}\norm{x}^2}\leq P(x)\leq Ce^{-c\norm{x}^2},\quad  C'^{-1}e^{-c'^{-1}\norm{x}^2}\leq Q(x)\leq C'e^{-c'\norm{x}^2},
    \]
    
    and that, for $j=1,2,3$,
    \[
    \norm{\nabla^j \log P}^2 \leq C\left(1+ \left|\log P\right|^j\right), \quad \norm{\nabla^j \log Q}^2 \leq C'\left(1+ \left|\log Q\right|^j\right).
    \]
\end{itemize}
\end{hyp}
Remark that in the $\mathbb{R}^d$ case, the assumption is indeed verified for $P$ and $Q$  Gaussian measures.
We may now state the first result of this paper
\begin{theorem}\label{th:main}
We let $\displaystyle \delta_{N}=\max_{(i,j) \in v^{2}} \xi_{i,j}^{N}$ and
\[H_{t}^{v}=H \left( P_{t}^{v,N}|Q_{t}^{v,N} \right), \qquad \quad I_{t}^{v}=I\left(P_{t}^{v,N}|Q_{t}^{v,N} \right).\]
Fix a $T>0$ and suppose that there exists a constant $C$ such that
\begin{equation*}
\forall v \subset [N], \
    H_{0}^{v}+I_{0}^{v} \leq C \left(\delta_{N} |v|+1\right)\left( \sum_{(i,j)\in v^{2}} \left(\xi_{i,j}^{N} \right)^{2}+\delta_{N} \sum_{(i,j) \in v} \left(\xi^{T} \xi+\xi \xi^{T} \right)_{i,j}+\delta_{N}^{2} |v| \right).
\end{equation*}
Suppose moreover that $\left(P_{0}^{[N],N}, Q_{0}^{[N],N}\right)$ satisfies Hypothesis \ref{hyp-princ}.
We distinguish two set of assumptions:
\begin{itemize}
    \item Torus Case. Suppose that $b$ is smooth and assume that that  $\nabla^2 \log Q_{0}^{u,N}$ is bounded with constants that do not depend on $u$ or $N$.
    \item $\mathbb{R}^d$ case. Suppose that $b$ is smooth bounded, with bounded derivatives and assume that there exists $C>0$ such that 
    \begin{equation*}
   \forall u \in [0,1], \ \sup_{t\in [0,T]}\Vert \nabla^2 \log Q_{t}^{u,N}\Vert \leq C.
\end{equation*}
%Suppose moreover that there exists V, W smooth such that $\Gamma= \nabla W$ and $F=\nabla V$
\end{itemize}
Then, there exists a constant $M_{T}$ (that does not depend on v,N) such that 
\begin{equation*}
\forall v \subset [N], \
    H_{T}^{v}+I_{T}^{v} \leq M_{T} \left(\delta_{N} |v|+1\right)\left( \sum_{(i,j)\in v^{2}} \left(\xi_{i,j}^{N} \right)^{2}+\delta_{N} \sum_{(i,j) \in v} \left(\xi^{T} \xi+\xi \xi^{T} \right)_{i,j}+\delta_{N}^{2} |v| \right).
\end{equation*}
In particular, if $\xi_{i,j}^{N}=G_{N} \left( \frac{i}{N}, \frac{j}{N} \right)$ with $G_{N}$ a graphon with values in $[0,1]$, we have 
\begin{equation*}
\forall v \subset [N], \
    H_{T}^{v}+I_{T}^{v} \leq M_{T} \frac{\left|v \right|^{2}}{N^{2}}.
\end{equation*}
\end{theorem}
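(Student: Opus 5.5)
The plan is to combine the BBGKY-type hierarchy of \cite{lacker_graphe} for the relative entropy with the Fisher-information hierarchy of \cite{fisher}, now indexed by subsets $v\subset[N]$ instead of integers $k$.

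\textbf{Step 1: the marginal equations.} We write the PDE for the marginal $P_t^{v,N}$. Integrating the Liouville equation of \eqref{eq_main} over the coordinates outside $v$ gives the drift
\[\sum_{j\in v}\xi_{i,j}^N b(x_i-x_j)+\sum_{j\notin v}\xi_{i,j}^N\,\mathbb{E}\big[b(x_i-X^j)\mid X^v=x_v\big],\qquad i\in v.\]
The marginal $Q_t^{v,N}=\bigotimes_{i\in v}Q_t^{i,N}$ is a tensor product and solves a linear Fokker--Planck equation with drift $\sum_j\xi_{i,j}^N\langle b(x_i-\cdot),Q_t^{j,N}\rangle$. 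Following \cite{lacker_graphe}, we differentiate $H_t^v$ in time and use Pinsker/transport inequalities for the conditional laws $P^{v\cup\{j\}|v}$ (as in \cite{lacker}). This should give
\[\frac{d}{dt}H_t^v\le -\tfrac12 I_t^v + C\Big(\sum_{i\in v}\Big(\sum_{j\in v}\xi_{i,j}^N\Big)^2\text{-type terms}\Big)+C\sum_{i\in v}\sum_{j\notin v}\xi_{i,j}^N\big(H_t^{v\cup\{j\}}-H_t^v\big).\]
The last term is the generator $\mathcal{A}$ of a Markov chain on subsets that adds $j$ at rate $\sum_{i\in v}\xi_{i,j}^N$.

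\textbf{Step 2: differentiating the Fisher information.} We set $f=P^{v}_t/Q^{v}_t$ and follow \cite{fisher}. The dissipation produces $-\int P\|\nabla^2\log f\|^2$. The drift mismatch $\delta b_i$, namely conditional expectation minus the $Q$-expectation, enters through terms of the form $\int P\,\nabla\log f\cdot\nabla\delta b$ and $\int P\,\nabla\log f\cdot\nabla^2\log Q\,\delta b$. Here the assumed bound on $\nabla^2\log Q_t^{u,N}$ is used; in the torus case it should be propagated from $t=0$ by a maximum-principle argument, as in \cite{bdd_hess}. Since $b$ is smooth with bounded derivatives, $\nabla$ of the conditional expectation can be expressed through $\nabla_{x_v}\log P^{v\cup\{j\}|v}$. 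After integration by parts against $\nabla^2\log f$, this is controlled by $I_t^{v\cup\{j\}}-I_t^v$ plus entropy differences, using the chain rule for Fisher information. We thus aim for
\[\frac{d}{dt}(H_t^v+I_t^v)\le C(H_t^v+I_t^v)+C\,\mathcal{A}(H+I)_t(v)+C\,c_N(v),\]
where $c_N(v)=\sum_{(i,j)\in v^2}(\xi_{i,j}^N)^2+\delta_N\sum_{i,j\in v}(\xi^T\xi+\xi\xi^T)_{i,j}+\delta_N^2|v|$. The $\xi^T\xi$ and $\xi\xi^T$ terms come from the covariance (second-order) expansion of the mean-field error, as in \cite{lacker_graphe}. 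Hypothesis \ref{hyp-princ} is used to justify the integrations by parts, through Gaussian tail and log-derivative bounds propagated in time.

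\textbf{Step 3: closing the hierarchy.} We apply Duhamel to the system along the Markov chain generated by $\mathcal{A}$, as in \cite{lacker_graphe}, with the $e^{Ct}$ factor absorbed by Gronwall. This gives
\[(H+I)_T(v)\le e^{CT}\,\mathbb{E}_v\Big[(H+I)_0(V_T)+\int_0^T c_N(V_s)\,ds\Big].\]
The chain has jump rate at most $C|V|$, since row sums are at most $1$. It also raises $c_N$ by at most $\delta_N$-controlled amounts per added index, which yields the prefactor $(\delta_N|v|+1)$ as in \cite{lacker_graphe}. The hypothesis on the initial data matches this shape exactly, so the bound holds for all $v$. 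For the graphon corollary, $\xi_{i,j}^N\le 1/N$ gives $\delta_N\le 1/N$ and $c_N(v)\lesssim |v|^2/N^2$.

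\textbf{Main obstacle.} The hardest step is Step 2: controlling the gradient of the conditional drift $\nabla_{x_v}\mathbb{E}[b(x_i-X^j)\mid x_v]$ by Fisher-information differences with a constant strictly compatible with the available dissipation. This must hold uniformly over $v$ and with coefficient $\sum_{i\in v}\xi_{i,j}^N$ rather than a cruder one. Here I would reproduce the argument of \cite{fisher} term by term, keeping track of the $\xi$ weights.
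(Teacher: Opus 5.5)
Your skeleton is the paper's. It uses the entropy hierarchy of \cite{lacker_graphe} and the Fisher-information evolution of Lemma~2.4 in \cite{fisher}, with $\nabla^2\log Q_t^{v,N}$ bounded and block diagonal because $Q_t^{v,N}$ is a product. The gradient of the conditional drift is controlled by $\mathcal{A}I_t^v$, and the hierarchy is closed through the Markov chain of \cite{lacker_graphe}.

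The step you call the main obstacle is simpler in the paper than you anticipate: there is no integration by parts against $\nabla^2\log f$ and no competition with the dissipation. Write $\langle b(x_i-\cdot),\nabla_{x_j}P_t^{v\cup\{k\}|v}\rangle=\int P_t^{v\cup\{k\}|v}(x^\star)\,b(x_i-x^\star)\,\nabla_{x_j}\log P_t^{v\cup\{k\}|v}$. Then apply Jensen, Cauchy--Schwarz in $k$ (using $\sum_k\xi_{i,k}^N\le 1$), and the chain rule $I_t^{v\cup\{k\}}-I_t^v\ge\sum_{j\in v}\int P_t^{v\cup\{k\}}\norm{\nabla_{x_j}\log P_t^{v\cup\{k\}|v}}^2$. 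This gives $C\mathcal{A}I_t^v$ with the right weights directly.

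Absorbing $CI_t^v$ by Gronwall, instead of the paper's $Z_t^v=I_t^v+\alpha H_t^v$ (which spends the $-\frac12 I_t^v$ of the entropy inequality), is a harmless variant on $[0,T]$. It works because $e^{t\mathcal{A}}$ preserves positivity, and it only costs a factor $e^{CT}$.

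\textbf{The genuine gap is the source term.}

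\emph{What the computation gives.} Your Step~1 records $\sum_{i\in v}(\sum_{j\in v}\xi_{i,j}^N)^2$, and this is exactly what the paper's bounds on $A$ and $B$ produce. Cauchy--Schwarz and the boundedness of $b,\nabla b$ give $C(v)=C\sum_{i\in v}(\sum_{k\in v}\xi_{i,k}^N)^2$. The off-diagonal terms $\xi_{i,j}^N\nabla b(x_i-x_j)$ only add $\sum_{(i,j)\in v^2}(\xi_{i,j}^N)^2\le C(v)$. The paper attributes the $\xi^T\xi+\xi\xi^T$, $\delta_N^2|v|$ and $(\delta_N|v|+1)$ structure to Theorem~2.15 of \cite{lacker_graphe}, not to the source.

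\emph{Why your swap fails.} In Step~2 you replace this source by $c_N(v)$, with only the phrase ``covariance expansion'' as justification, and the replacement is not harmless. For $\xi_{i,j}^N=1/N$ the crude source equals $|v|^3/N^2$, whereas $c_N(v)\le 4|v|^2/N^2$. Moreover, the chain only enlarges $v$ and $S(v)=\sum_{i\in v}(\sum_{j\in v}\xi_{i,j}^N)^2$ is nondecreasing in $v$. So the solution of $\partial_t Z^v=C\mathcal{A}Z^v+S(v)$ with zero data, which satisfies all your inequalities, is at least $T\,S(v)$. No closing argument in Step~3 can therefore produce the stated rate from this source.

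\emph{What you must supply.} You need a cancellation estimate for the within-$v$ fluctuation $\sum_{k\in v}\xi_{i,k}^N\bigl(b(x_i-x_k)-\langle b(x_i-\cdot),Q_t^{k}\rangle\bigr)$, and for its $\nabla b$ analogue in the diagonal part of $B$. One way is Donsker--Varadhan against the product measure $Q_t^{v,N}$. Conditionally on $x_i$, the summands with $k\neq i$ are independent, centered and bounded by $C\xi_{i,k}^N$. This yields a bound $C\sum_{k\in v}(\xi_{i,k}^N)^2(1+H_t^v)$, and the factor $H_t^v$ then needs an a priori control, as in \cite{lacker}.

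The paper's write-up itself states only the crude $C(v)$ and defers this point to \cite{lacker_graphe}. Reproducing its computation of $A$ will therefore not close the gap.
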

\begin{rem}
\indent
\begin{itemize}
\item  As in \cite{fisher}, we suppose for simplicity that all derivatives are bounded, but derivatives up to four may be sufficient.
\item Our proof relies on very similar estimates as in \cite{lacker_graphe} to obtain a bound of the form 
\begin{align*}
\frac{d}{dt} I_{t}^{v} \leq C_{0} I_{t}^{v}+ \mathcal{A} H_{t}^{v}+\mathcal{A} I_{t}^{v}+C(v),
\end{align*}
where $\mathcal{A}$ is an operator defined on functions $F:\mathcal{P}([N])\to \mathbb{R}$ by
\begin{align*}
\forall v \in \mathcal{P}([N]), \ \mathcal{A}F(v)=\sum_{i \in v} \sum_{k \notin v} \xi_{i,k}^{N} \left( F(v \cup \{k \})-F(v)\right).
\end{align*}
C is a function of $v$ depending of the coefficients $\displaystyle (\xi_{i,j}^{N})_{i,j}$. In our proof, we will use 
\begin{align*} C(v)=C_{1}\sum_{i \in v } \left(\sum_{j \in v} \xi_{i,j}^{N} \right)^{2},
\end{align*}
where $C_{1}$ is a constant that does not depend on $N$ or $v$.
We prove the final bound by using a change of variable of the form $Z_{t}^{v}=\alpha H_{t}^{v}+I_{t}^{v}$ to get the estimate
\begin{align*}
\frac{d}{dt} Z_{t}^{v} \leq \mathcal{A}Z_{t}^{v}+C(v).
\end{align*}
Because we rely on the exact same bounds as in \cite{lacker_graphe}, we can also get similar results as their Theorem 2.8, 2.9, 2.11 on averaged and maximum Fisher information, with the exception of uniform in time results, because they rely on a fine analysis of the constants and other assumptions.
\item We only considered systems subject only to an interaction term for the sake of simplicity but we can also consider smooth potentials with bounded derivatives.
\item Note that, because of the equivalence of the independent projection and graphon mean field systems, Lemma \ref{lemme_hessienne} provides a framework for the second assumption of the Theorem.
\end{itemize}
\end{rem}
The proof mainly relies on the use of the BBGKY hierarchy method for non exchangeable diffusions done in \cite{lacker_graphe}  and on the bound on Fisher information of Lemma 2.4 of \cite{fisher}. Provided that the bound of Theorem \ref{th:main} vanishes as $N \to +\infty$, we therefore know that the large population behavior of a finite number of particles of system \eqref{eq_main} is the same as \eqref{ind_proj}. Due to the equivalence in law of system \eqref{ind_proj} with a graphon mean field system stated in the introduction, we investigate the behavior of \eqref{ind_proj} through stability estimates for graphon mean field systems. 
The goal of the following two Theorems is to obtain bounds on $H\left(Q_{t}^{u,N}|Q_{t}^{u} \right)$ and $I\left(Q_{t}^{u,N}|Q_{t}^{u} \right)$.
\begin{theorem} \label{ent_graphon}
Let $T>0$ and $G_{1}, G_{2}$ two bounded graphons. Consider 
For $i \in \{1,2\}$ and $u \in [0,1]$, the two associated graphon particle systems
\begin{align*}
dX_{u}^{i}(t)=\int_{0}^{1} \int_{\mathbb{R}^{d}} b(X_{u}^{i}(s),x) G_{i}(u,v) P_{t}^{i,v}(dx)dvdt+  dB_{u}(t) 
          \quad \text{where} \quad     P_{t}^{i,v}=\text{Law}(X_{u}^{i}(t)).
\end{align*}
Let, for $u \in [0,1]$: $P_{[t]}^{i,u}=\text{Law}(X_{s}^{i,u}, s \in [0,t])$ and $H_{t}^{u}=H\left(P_{[t]}^{1,u}|P_{[t]}^{2,u}\right)$. Suppose 
\begin{itemize}
    \item b is bounded,
    \item $\displaystyle \forall u \in [0,1], \ H_{0}^{u} \leq C \left(\underset{u\in [0,1]}{\sup} \int_{0}^{1} \left| G_{1}(u,v)-G_{2}(u,v) \right| dv \right)^{2}.$
\end{itemize}
Then, there exists a constant $M_{T}$ (that does not depend on $u$) such that, for all $u \in [0,1]$,
\begin{equation*}
H_{T}^{u} \leq M_{T}  \left(\underset{u\in [0,1]}{\sup} \int_{0}^{1} \lvert G_{1}(u,v)-G_{2}(u,v) \rvert dv \right)^{2}.
\end{equation*}
\end{theorem}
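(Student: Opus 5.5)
The plan is to compare the two path laws with Girsanov's theorem and then close the estimate with a Gronwall argument that is uniform in $u$. Fix $u\in[0,1]$. Both processes $X^1_u$ and $X^2_u$ are diffusions with unit diffusion coefficient. Their drifts are
\[ \beta^i_u(t,x)=\int_0^1\int b(x,y)G_i(u,v)P^{i,v}_t(dy)\,dv, \]
which are bounded because $b$ and the $G_i$ are bounded. By Girsanov's theorem, combined with the chain rule for relative entropy (condition on the initial value), we get
\[ H_t^u = H_0^u + \frac12\int_0^t \mathbb{E}\left[\norm{\beta^1_u(s,X^1_u(s))-\beta^2_u(s,X^1_u(s))}^2\right]ds. \]
Boundedness of the drifts ensures Novikov's condition, so this identity is rigorous. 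Measurability in $u$ is taken from the framework of \cite{nonlinear_graphon}.

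Next I would split the drift difference at a fixed point $x$ into two terms:
\[ \beta^1_u-\beta^2_u=\int_0^1\!\!\int b(x,y)\big(G_1-G_2\big)(u,v)P^{1,v}_s(dy)dv+\int_0^1 G_2(u,v)\left(\int b(x,y)\big(P^{1,v}_s-P^{2,v}_s\big)(dy)\right)dv. \]
The first term is bounded by $\norm{b}_\infty\,\varepsilon$, where $\varepsilon=\sup_u\int_0^1|G_1-G_2|(u,v)dv$. For the second term, I would use $\left|\int b(x,\cdot)\,d(\mu-\nu)\right|\le 2\norm{b}_\infty\norm{\mu-\nu}_{TV}$ and then Pinsker's inequality. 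The time marginals satisfy $H(P^{1,v}_s|P^{2,v}_s)\le H^v_s$ by the data processing inequality, since time marginals are projections of path laws. This gives a bound $C\int_0^1 G_2(u,v)\sqrt{H^v_s}\,dv\le C\norm{G_2}_\infty\sqrt{\sup_v H^v_s}$. Squaring and using $(a+b)^2\le 2a^2+2b^2$ yields
\[ H^u_t\le H^u_0+C t\,\varepsilon^2+C\int_0^t \sup_{v}H^v_s\,ds. \]

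The final step is to take the supremum over $u$ on the left-hand side. With $h(t)=\sup_u H^u_t$ and the hypothesis on $H_0^u$, this gives $h(t)\le (C+CT)\varepsilon^2+C\int_0^t h(s)ds$, and Gronwall's lemma then yields $h(T)\le M_T\varepsilon^2$.

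The main obstacle is that Gronwall's lemma requires $h$ to be finite and measurable, and the sup over uncountably many $u$ could in principle destroy measurability. Finiteness is a priori available: the Girsanov formula with bounded drifts gives $H_t^u\le H_0^u+2\norm{b}_\infty^2\norm{G}_\infty^2t$, and the right-hand side is bounded uniformly in $u$. For measurability in $s$, I would note that $s\mapsto H^u_s$ is nondecreasing for each $u$, by the data processing inequality applied to the path laws on $[0,s]$. Hence $h$ is nondecreasing, therefore measurable, and the integral form of Gronwall's lemma applies directly.
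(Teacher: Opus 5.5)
Your proof is correct, and its first half matches the paper's. The paper also starts from the Girsanov entropy identity (Lemma 4.4 of \cite{lacker}) and splits the drift difference into the same two terms $A_{1,s}$ and $A_{2,s}$. It controls the second term, as you do, by Pinsker's inequality and the fact that time marginals are projections of the path laws.

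The routes differ in how the loop is closed. The paper keeps the dependence on $v$: by Cauchy--Schwarz it gets $A_{2,s}\le C\int_0^1 G_2(u,v)H^v_s\,dv$. This gives $H_t\le H_0+Ct\,\varepsilon^2+\int_0^t\mathcal{A}H_s\,ds$, with the kernel operator $\mathcal{A}f(u)=\int_0^1 G(u,v)f(v)\,dv$ acting on $L^\infty([0,1])$. It then runs a Gronwall argument using positivity of $e^{t\mathcal{A}}$ and the bound $e^{t\mathcal{A}}1\le e^{Ct}$. You instead bound $H^v_s$ by $\sup_v H^v_s$ at once and apply the scalar Gronwall lemma to $h(t)=\sup_u H^u_t$.

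Your route is more elementary and somewhat more robust:
\begin{itemize}
\item it never needs $u\mapsto H^u_t$ to be measurable, which the paper uses implicitly when it treats $H_t$ as an element of $L^\infty([0,1])$;
\item it needs no continuity or differentiability in time of $F_t=\int_0^t\mathcal{A}H_s\,ds$, since monotonicity of $s\mapsto H^u_s$ (data processing) already makes $h$ measurable;
\item it loses nothing for the stated uniform-in-$u$ bound, because the paper's estimate on $e^{t\mathcal{A}}1$ only uses $\sup_u\int_0^1 G(u,v)\,dv\le C$.
\end{itemize}

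What the kernel formulation buys is structure. It parallels the hierarchy operator of Section~\ref{sec_3}. Moreover, since $\mathcal{A}^n g(u)\le \norm{G}_\infty^n\int_0^1 g$ for $g\ge 0$ and $n\ge 1$, it could give a finer bound depending on $u$. With $d_u=\int_0^1|G_1-G_2|(u,v)\,dv$, this would read $H^u_T\lesssim H^u_0+d_u^2+\int_0^1 (H^v_0+d_v^2)\,dv$. The sup argument cannot see this refinement, although the paper does not exploit it either.
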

\begin{theorem} \label{fisher_graphon}
Let $T>0$ and $G_{1}, G_{2}$ two bounded graphons. Consider 
For $i \in \{1,2\}$ and $u \in [0,1]$, the two associated graphon particle systems
\begin{align*}
dX_{u}^{i}(t)=\int_{0}^{1} \int_{\mathbb{R}^{d}} b(X_{u}^{i}(s),x) G_{i}(u,v) P_{t}^{i,v}(dx)dvdt+  dB_{u}(t) 
          \quad \text{where} \quad     P_{t}^{i,v}=\text{Law}(X_{u}^{i}(t)).
\end{align*}
Let, for $u \in [0,1]$: $I_{t}^{u}=I\left(P_{t}^{1,u}|P_{t}^{2,u}\right)$. Suppose that $(P_{0}^{1,u},P_{0}^{2,u})$ satisfies Hypothesis \ref{hyp-princ} for all $u \in [0,1]$ and that
\begin{itemize}
    \item b is smooth
    \item $\displaystyle \forall u \in [0,1], \ I_{0}^{u}+H_{0}^{u} \leq C \left(\underset{u\in [0,1]}{\sup} \int_{0}^{1} \left| G_{1}(u,v)-G_{2}(u,v) \right| dv \right)^{2},$
    \item  
 $\displaystyle  \forall u \in [0,1], \ \sup_{t\in [0,T]}\Vert \nabla^2 \log Q_{t}^{u,N}\Vert \leq C.$

\end{itemize}
Then, there exists a constant $M_{T}$ (that does not depend on $u$) such that, for all $u \in [0,1]$,
\begin{equation*}
I_{T}^{u}+H_{T}^{u} \leq M_{T}  \left(\underset{u\in [0,1]}{\sup} \int_{0}^{1} \lvert G_{1}(u,v)-G_{2}(u,v) \rvert dv \right)^{2}.
\end{equation*}
\end{theorem}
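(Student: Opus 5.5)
Write $\varepsilon := \sup_{u}\int_0^1|G_1(u,v)-G_2(u,v)|\,dv$, and let $f^u_t = P^{1,u}_t/P^{2,u}_t$ denote the density ratio.

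The plan is to derive a closed differential inequality for $I^u_t$ that involves $H^u_t$, $\sup_w I^w_t$ and $\varepsilon^2$. I will then combine it with Theorem \ref{ent_graphon}, applied to the marginals, and close the argument with Gronwall.

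\textbf{Step 1: the two drifts and their difference.} For each fixed $u$, the two marginals $P^{i,u}_t$ are strong solutions (Section \ref{sol_forte}) of linear Fokker--Planck equations
\[
\partial_t P^{i,u}_t=\tfrac12\Delta P^{i,u}_t-\nabla\cdot\big(B^{i,u}_t P^{i,u}_t\big),
\qquad
B^{i,u}_t(x)=\int_0^1\!\!\int b(x,y)\,G_i(u,v)\,P^{i,v}_t(dy)\,dv .
\]
I split the difference of the drifts as
\[
B^{1,u}_t-B^{2,u}_t=\int (G_1-G_2)(u,v)\,\langle b(x,\cdot),P^{1,v}_t\rangle\,dv+\int G_2(u,v)\,\langle b(x,\cdot),P^{1,v}_t-P^{2,v}_t\rangle\,dv .
\]
\emph{First term.} It is bounded by $\|b\|_\infty\varepsilon$. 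Its $x$-gradient is bounded by $\|\nabla b\|_\infty\varepsilon$.
\emph{Second term.} By Pinsker and Cauchy--Schwarz, it is bounded by $C\sup_v\sqrt{H^v_t}$. Its gradient is bounded the same way, replacing $b$ by $\nabla_x b$.
Since Theorem \ref{ent_graphon} gives $H^v_t\le M\varepsilon^2$ (marginal entropy is controlled by path entropy), both $\|B^1-B^2\|_\infty$ and $\|\nabla(B^1-B^2)\|_\infty$ are $O(\varepsilon)$, uniformly in $u$ and $t\le T$.

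\textbf{Step 2: time derivative of the Fisher information.} I compute $\frac{d}{dt}I(P^{1,u}_t|P^{2,u}_t)$ for two Fokker--Planck equations with different drifts, exactly as in Lemma 2.4 of \cite{fisher}. Writing $\varphi=\log f^u_t$, the computation yields:
\begin{itemize}
\item a dissipation term $-\int P^1\|\nabla^2\varphi\|^2$;
\item terms involving $\nabla B^2$ and $\nabla^2\log P^2$ against $\nabla\varphi\otimes\nabla\varphi$, bounded by $C I^u_t$ using $\|\nabla B^2\|_\infty\le C$ and the assumed Hessian bound on $\log P^{2,u}_t$;
\item terms involving $B^1-B^2$ and its gradient.
\end{itemize}
The last group reads
\[
\int P^1\big[\nabla\varphi\cdot\nabla\big((B^1-B^2)\cdot\nabla\varphi\big)+\dots\big].
\]
After integrating by parts once, it is controlled by
\[
\|B^1-B^2\|_{W^{1,\infty}}\Big(\int P^1|\nabla^2\varphi|^2\Big)^{1/2}\sqrt{I}
+ \|B^1-B^2\|\cdot\|\nabla\log P^2\|\cdot\sqrt I .
\]
The dissipation term absorbs the $\nabla^2\varphi$ factor through Young's inequality.
\emph{Obstacle in this step:} on $\mathbb{R}^d$, $\nabla\log P^2$ is unbounded. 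I would handle it with the Gaussian bounds of Hypothesis \ref{hyp-princ}, propagated in time, so that $\int P^1\|\nabla\log P^2\|^2\le C$. This leaves terms of size $C\varepsilon\sqrt{I}\le C\varepsilon^2+CI$.

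\textbf{Step 3: closing the estimate.} Step 2 gives
\[
\frac{d}{dt}I^u_t\le C I^u_t+C\varepsilon^2 .
\]
Taking the supremum over $u$ and applying Gronwall with the initial assumption $I^u_0\le C\varepsilon^2$ gives $I^u_T\le M_T\varepsilon^2$. Adding the bound on $H^u_T$ from Theorem \ref{ent_graphon} yields the claim.

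\textbf{Main obstacle.} I expect the hardest part to be rigorously justifying the differentiation and integrations by parts in Step 2. This needs strong-solution regularity, tail control, and the second-order estimates on $\log P^{2,u}_t$, uniformly in $u$. That is presumably why the Hessian bound is imposed as a hypothesis and why the results of Section \ref{sec_7} are needed.
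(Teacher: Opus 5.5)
Your proposal is correct and follows essentially the same route as the paper. You differentiate $I^u_t$ via Lemma 2.4 of \cite{fisher} with the Hessian bound on $\log P^{2,u}_t$, bound $B^{1,u}_t-B^{2,u}_t$ and its gradient by $C\varepsilon$ plus a Pinsker term fed by Theorem \ref{ent_graphon}, and close with Gronwall. The one difference is that in the paper's form of that lemma the drift difference is paired only with $\nabla^2\log P^{2,u}_t$ (bounded by hypothesis) and with $\nabla(B^{1,u}_t-B^{2,u}_t)$, so the unbounded $\nabla\log P^{2,u}_t$ term you flag in Step 2 never appears there.
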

\begin{rem}
\indent
\begin{itemize} \item While the assumption that $\displaystyle 
   \forall u \in [0,1], \ \sup_{t\in [0,T]}\Vert \nabla^2 \log Q_{t}^{u,N}\Vert$ is uniformly bounded may seem strong, Lemma \ref{lemme_hessienne} provides a framework where it is satisfied.
\item Note that the distance considered is not the usual cut metric for graphon, but it appears naturally in our case. Moreover, we have the inequality, for all graphons $G_{1}$ and $G_{2}$
\[\norm{G_{1}-G_{2}}_{\square}\leq \underset{u\in [0,1]}{\sup} \int_{0}^{1} \left| G_{1}(u,v)-G_{2}(u,v) \right| dv.\]
\end{itemize}
\end{rem}
In order to prove Theorem \ref{fisher_graphon} and \ref{ent_graphon}, we will rely on the PDEs satisfied by both $P_{t}^{1,u}$ and $P_{t}^{2,u}$. Since we use the fact that both densities are strong solutions to justify our computations, we dedicate Section \ref{sol_forte} to the proof of that fact.

\section{Proof of Theorem 2.2} \label{sec_3}

We first start by computing the time evolution of the relative Fisher information by using a result from \cite{fisher}, then we bound all the terms and obtain a system of differential inequalities. Using the estimates of \cite{lacker_graphe} yields the result.
Notice that we have, for $v \subset [N]$ with $v \neq [N]$:
\begin{align*}
\partial_{t} Q_{t}^{v}=  \Delta Q_{t}^{v}-\sum_{i \in v} \nabla_{x_{i}} \cdot \left(b_{1}^{i,v} Q_{t}^{v} \right),
\end{align*}
\begin{align*}
\partial_{t} P_{t}^{v}=  \Delta P_{t}^{v}-\sum_{i \in v} \nabla_{x_{i}} \cdot \left(b_{2}^{i,v} P_{t}^{v} \right),
\end{align*}
with $b_{1}^{i,v}$ and $b_{2}^{i, [n]}$ being smooth, bounded functions with bounded derivatives defined as 
\begin{align*}
& b_{1}^{i,v}=\sum_{j=1}^{N} \xi_{i,j}^{N} \left \langle b(x_{i}-\cdot), Q_{t}^{j} \right \rangle 
\quad \text{and} \quad 
b_{2}^{i,v}=\sum_{j \in v} \xi_{i,j}^{N} b(x_{i}-x_{j})+\sum_{j \notin v} \xi_{i,j}^{N} \left \langle b(x_{i}-\cdot), P_{t}^{v \cup \{j\}|v} \right \rangle.
\end{align*}
Since the bound on $H_{t}^{v}$ has already been done in \cite{lacker_graphe}, we will only focus on $I_{t}^{v}$. In order to so, notice that we can use Lemma 2.4 (and more precisely the second point of remark 2.5) of \cite{fisher}. While it is not straightforward to check its assumptions, our hypothesis are the same so we can prove very similar regularity estimates to those of their Section 6 for $P_{t}^{v,N}$ and $Q_{t}^{v,N}$ to justify all the integration by parts we use. Let $\varepsilon >0$. According to this Lemma, the evolution of $I_{t}^{v}$ is bounded by
\begin{align*}
\frac{d}{dt} I_{t}^{v} &\leq -(2-\varepsilon)\sum_{i,j \in v}  \int P_{t}^{v} \left\Vert \nabla^2_{v_i,v_j} \log \frac{P_{t}^{v}}{Q_{t}^{v}}\right\Vert^2  \\
&\quad +4\sum_{i,j \in v} \int P_{t}^{v} \ \nabla_{v_i} \log \frac{P_{t}^{v}}{Q_{t}^{v}} \cdot \nabla^2_{v_i,v_j} \log Q_{t}^{v} \nabla_{v_j} \log \frac{P_{t}^{v}}{Q_{t}^{v}}
\\&
\quad -2 \sum_{i,j \in v} \int P_{t}^{v} \ \nabla_{v_{i}} \log \frac{P_{t}^{v}}{Q_{t}^{v}} \cdot \nabla_{v_{i}} b_{2}^{j} \ \nabla_{v_{j}} \log \frac{P_{t}^{v}}{Q_{t}^{v}}\\
&
\quad +2  \sum_{i,j \in v} \int P_{t}^{v} \ \nabla_{v_{i}} \log \frac{P_{t}^{v}}{Q_{t}^{v}} \cdot \nabla^2_{v_{i},v_{j}} \log Q_{t}^{v}  \left(b_{2}^{j}-b_{1}^{j}\right)
\\ &\quad + C\sum_{i,j \in v} \int P_{t}^{v} \norm{\nabla_{v_{i}} \left(b_{2}^{j}-b_{1}^{j} \right)}^{2}.
\end{align*}
Notice moreover that $\nabla^{2} \log Q_{t}^{v}$ is uniformly bounded by a constant that does not depend in N or v according to Lemma \ref{lemme_hessienne}. Moreover, because the particles at the limit are independent, we have that, if $i \neq j$, $\nabla^{2}_{v_{i}, v_{j}} \log Q_{t}^{v}=0$ so that 
\begin{align*}
 \sum_{i,j \in v} \int P_{t}^{v} \ \nabla_{v_i} \log \frac{P_{t}^{v}}{Q_{t}^{v}} \cdot \nabla^2_{v_i,v_j} \log Q_{t}^{v} \ \nabla_{v_j} \log \frac{P_{t}^{v}}{Q_{t}^{v}} \leq C I_{t}^{v},
\end{align*}
with $C$ constant that does not depend on $v$ or $N$. Therefore, by using the fact that $\nabla_{v_{i}} b_{2}^{j}$ is bounded and the inequality $x \cdot y \leq \norm{x}^{2}+\frac{1}{4 } \norm{y}^{2}$ in the fourth term and taking $\varepsilon$ small enough, we have 
\begin{align*}
\frac{d}{dt} I_{t}^{v} \leq C I_{t}^{v}+C \underbrace{ \sum_{i \in v} \int P_{t}^{v} \norm{b_{2}^{i,v}-b_{1}^{i,v}}^{2}}_{A}+C \underbrace{ \sum_{(i,j) \in v^{2}} \int P_{t}^{v} \norm{\nabla_{x_{j}} b_{2}^{i,v}-\nabla_{x_{j}} b_{1}^{i,v}}^{2} }_{ B}.
\end{align*}
We can bound A by using very similar arguments as in \cite{lacker}, while we will bound B by using \cite{fisher}. We have, by replacing $b_{1}^{i,v}$ and $b_{2}^{i,v}$ by their expression and by using the fact that $\sum_{i=1}^{N} \xi_{i,j}^{N} \leq 1$ and that $b$ is bounded,
\begin{align*}
A& =\sum_{i \in v} \int P_{t}^{v} \norm{\sum_{k \in v} \xi_{i,k}^{N} \left( b(x_{i}-x_{k})-\left \langle b(x_{i}-\cdot), Q_{t}^{k} \right \rangle \right) +\sum_{k \notin v} \xi_{i,k}^{N} \left \langle b(x_{i}-\cdot), P_{t}^{\{k\} \cup v|v}-Q_{t}^{k} \right \rangle }^{2} 
\\ &
\leq C \sum_{i \in v} \int P_{t}^{v} \sum_{k \in v} \xi_{i,k}^{N} \sum_{k \in v} \xi_{i,k}^{N} \norm{b(x_{i}-x_{k})-\left \langle b(x_{i}-\cdot),Q_{t}^{k} \right \rangle}^{2}
\\ & \quad+\sum_{i \in v} \int P_{t}^{v} \sum_{k \notin v} \xi_{i,k}^{N} \sum_{k \notin v} \xi_{i,k}^{N} \norm{\left \langle b(x_{i}-\cdot), P_{t}^{\{k\} \cup v|v}-Q_{t}^{k} \right \rangle }^{2}
\\ &
\leq C \sum_{i \in v} \left(\sum_{k \in v} \xi_{i,k}^{N} \right)^{2}+C \sum_{i \in v} \sum_{k \notin v} \xi_{i,k}^{N} \int P_{t}^{v} \norm{\left \langle b(x_{i}-\cdot), P_{t}^{\{k\} \cup v|v}-Q_{t}^{k} \right \rangle }^{2}.
\end{align*}
By using Pinsker's inequality and the fact that $b$ is bounded as well as a towering property of the relative entropy, we have
\begin{align*}
A \leq C(v)+C \ \mathcal{A} H_{t}^{v},
\end{align*}
where, for every $v \subset [n]$ and $F:  \mathcal{P}([n]) \to \mathbb{R}$,
\begin{align*}
C(v) \coloneqq C \sum_{i \in v} \left(\sum_{k \in v} \xi_{i,k}^{N} \right)^{2}, \qquad \mathcal{A} F(v)=\sum_{i \in v} \sum_{k \notin v} \xi_{i,k}^{N} \left( F(v \cup \{k \})-F(v) \right).
\end{align*}
Let us now bound $B$. Notice that we have, if $(i,j) \in v^{2}$ and $i=j$,
\begin{align*}
\nabla_{x_{i}} \left(b_{2}^{i,v}-b_{1}^{i,v} \right) & = \sum_{\substack{k \in v \\ k \neq i}} \xi_{i,k}^{N} \left( \nabla b(x_{i}-x_{k})-\left \langle \nabla b(x_{i}-\cdot), Q_{t}^{k} \right \rangle \right)
\\ & +\sum_{\substack{k \notin v \\ k \neq i}} \xi_{i,k}^{N} \left \langle \nabla b(x_{i}-\cdot), P_{t}^{v \cup \{k \}|v}-Q_{t}^{k} \right \rangle + \sum_{\substack{k \notin v \\ k \neq i}} \xi_{i,k}^{N} \left \langle b(x_{i}-\cdot), \nabla_{x_{i}} P_{t}^{v \cup \{k \}|v} \right \rangle.
\end{align*}
 Moreover, if $i \neq j$, we have
 \begin{align*}
 \nabla_{x_{j}} \left(b_{2}^{i,v}-b_{1}^{i,v} \right) &=\nabla_{x_{j}} \left(\sum_{k \in v} \xi_{i,k}^{N} b(x_{i}-x_{k})+\sum_{k \notin v} \xi_{i,k}^{N} \left \langle b(x_{i}-\cdot), P_{t}^{v \cup \{k\}|v} \right \rangle  \right)
 \\ &= -\xi_{i,j}^{N} \nabla b(x_{i}-x_{j})+ \sum_{k \notin v} \xi_{i,k}^{N} \left \langle b(x_{i}-\cdot), \nabla_{x_{j}} P_{t}^{v \cup \{k\}|v} \right \rangle .
 \end{align*}
Therefore, we have
\begin{align*}
 B & \leq C \sum_{i \in v} \int P_{t}^{v} \norm{\nabla_{x_{i}} \left(b_{2}^{i,v}-b_{1}^{i,v} \right)}^{2}+C \sum_{\substack{(i,j) \in v^{2} \\ i \neq j}} \int P_{t}^{v} \norm{\nabla_{x_{j}} \left(b_{2}^{i,v}-b_{1}^{i,v} \right)}^{2}
 \\ &
 \leq C \sum_{i \in v} \int P_{t}^{v} \left \lVert \sum_{\substack{j \in v \\ k \neq i}} \xi_{i,k}^{N} \left( \nabla b(x_{i}-x_{k})-\left \langle \nabla b(x_{i}-\cdot), Q_{t}^{k} \right \rangle \right) \right.
 \\ &
\qquad \qquad \qquad \qquad \qquad \qquad \left. +\sum_{\substack{k \notin v \\ k \neq i}} \xi_{i,k}^{N} \left \langle \nabla b(x_{i}-\cdot), P_{t}^{v \cup \{k \}|v}-Q_{t}^{k} \right \rangle \right \rVert^{2}
 \\ &
 \quad+C \sum_{(i,j) \in v} \int P_{t}^{v} \norm{-\xi_{i,j}^{N} \nabla b(x_{i}-x_{j})+ \sum_{k \notin v} \xi_{i,k}^{N} \left \langle b(x_{i}-\cdot), \nabla_{x_{j}} P_{t}^{v \cup \{k\}|v} \right \rangle }^{2}
 \\ & 
 \quad +C \sum_{i \in v} \int P_{t}^{v} \norm{ \sum_{\substack{k \notin v \\ k \neq i}} \xi_{i,k}^{N} \left \langle b(x_{i}-\cdot), \nabla_{x_{i}} P_{t}^{v \cup \{k \}|v} \right \rangle}^{2}.
\end{align*}
Notice that, by using very similar arguments that we used to bound A, the first term in the last inequality is bounded by $C(v)+C \mathcal{A}H_{t}^{v}$.
By using the inequality $\norm{x+y}^{2} \leq 2 \left(\norm{x}^{2}+\norm{y}^{2} \right)$ we have 
\begin{align*}
& \sum_{(i,j) \in v}  \int P_{t}^{v}  \norm{-\xi_{i,j}^{N} \nabla b(x_{i}-x_{j})+ \sum_{k \notin v} \xi_{i,k}^{N}  \left \langle b(x_{i}-\cdot), \nabla_{x_{j}} P_{t}^{v \cup \{k\}|v} \right \rangle }^{2} \\ & \leq C \sum_{(i,j)\in v^{2}} \int P_{t}^{v} \left(\xi_{i,j}^{N} \norm{\nabla b(x_{i}-x_{j})} \right)^{2}
 +C \sum_{(i,j) \in v^{2}} \int P_{t}^{v} \norm{\sum_{k \notin v} \xi_{i,k}^{N} \left \langle b(x_{i}-\cdot), \nabla_{x_{i}} P_{t}^{v \cup \{k \}|v} \right \rangle}^{2}.
\end{align*}
Because $\nabla b$ is bounded and the $(\xi_{i,j}^{N})_{i,j}$ are nonnegative, we can bound the first term
\begin{align*}
\sum_{(i,j)\in v^{2}} \int P_{t}^{v} \left(\xi_{i,j}^{N} \norm{\nabla b(x_{i}-x_{j})} \right)^{2} & \leq C \sum_{(i,j) \in v^{2}} \left(\xi_{i,j}^{N} \right)^{2}
\\ &
\leq \sum_{i \in v} \left( \sum_{j \in v} \xi_{i,j}^{N} \right)^{2}
\\ &
\leq C(v).
\end{align*}
For the second term, notice that we have, by the Cauchy-Schwarz inequality and the fact that $\sum_{i} \xi_{i,j}^{N} \leq 1$
\begin{align*}
\sum_{(i,j) \in v^{2}} \int P_{t}^{v} & \norm{\sum_{k \notin v} \xi_{i,k}^{N} \left \langle b(x_{i}-\cdot), \nabla_{x_{i}} P_{t}^{v \cup \{k \}|v} \right \rangle}^{2} \\ & \leq \sum_{(i,j) \in v^{2}} \int P_{t}^{v} \left(\sum_{k \notin v} \xi_{i,k}^{N} \right) \sum_{k \notin v} \xi_{i,k}^{N} \norm{\left \langle b(x_{i}-\cdot), \nabla_{x_{j}} P_{t}^{v \cup \{k \}|v} \right \rangle}^{2}
\\ &
\leq  \sum_{(i,j) \in v^{2}} \sum_{k \notin v} \xi_{i,k}^{N} \int P_{t}^{v} \norm{\left \langle b(x_{i}-\cdot), \nabla_{x_{j}} P_{t}^{v \cup \{k \}|v} \right \rangle}^{2}
\\ &
\leq \sum_{(i,j) \in v^{2}} \sum_{k \notin v} \xi_{i,k}^{N} \int P_{t}^{v} \norm{\int P_{t}^{v \cup \{k \}|v}(x^{\star}) b(x_{i}-x^{\star}) \cdot \nabla_{x_{j}}  \log P_{t}^{v \cup\{k \}|v}}^{2}
\\ &
\leq \sum_{(i,j) \in v^{2}} \sum_{k \notin v} \xi_{i,k}^{N} \int P_{t}^{v} \int P_{t}^{v \cup \{k \}|v}(x^{\star}) \norm{b(x_{i}-x^{\star}) \cdot \nabla_{x_{j}} \log  P_{t}^{v \cup\{k \}|v}}^{2}.
\end{align*}
Notice that we have the following property of decomposition of Fisher information, notably used in \cite{fisher}
\begin{align*}
I_{t}^{v \cup \{k\}}-I_{t}^{v}& =\sum_{j \in v} \int P_{t}^{v \cup \{k\}} \norm{\nabla_{x_{j}} \log P_{t}^{v \cup \{k\}|v}}^{2}+\int P_{t}^{v \cup \{k\}} \norm{\log \frac{P_{t}^{v \cup \{k\}}}{Q_{t}^{v \cup \{k\}}}}^{2}
\\ &
\geq \sum_{j \in v} \int P_{t}^{v \cup \{k\}} \norm{\nabla_{x_{j}} \log P_{t}^{v \cup \{k\}|v}}^{2}.
\end{align*}
We get, by using the previous identity and the fact that $b$ is bounded 
\begin{align*}
\sum_{(i,j) \in v^{2}} \int P_{t}^{v} \norm{\sum_{\substack{k \notin v }} \xi_{i,k}^{N} \left \langle b(x_{i}-\cdot), \nabla_{x_{j}} P_{t}^{v \cup \{k \}|v} \right \rangle}^{2} & \leq C \sum_{(i,j) \in v^{2}} \sum_{\substack{k \notin v}} \xi_{i,k}^{N} \int P_{t}^{v \cup \{k \}} \norm{\nabla_{x_{j}} P_{t}^{v \cup \{k \}|v}}^{2}
\\ &
\leq C \sum_{i \in v} \sum_{k \notin v} \xi_{i,k}^{N} \left(I_{t}^{v \cup \{k \}}-I_{t}^{v} \right)
\\ &
\leq C \mathcal{A} I_{t}^{v}.
\end{align*}
For the last term, we can use a very similar argument than before, by using the fact that $b$ is bounded
\begin{align*}
 \sum_{i \in v} \int P_{t}^{v} & \norm{ \sum_{k \notin v } \xi_{i,k}^{N} \left \langle b(x_{i}-\cdot), \nabla_{x_{i}} P_{t}^{v \cup \{k \}|v} \right \rangle}^{2} \\ & \quad \quad \leq \sum_{i \in v} \sum_{k \notin v} \xi_{i,k}^{N} \int P_{t}^{v} \norm{\left \langle b(x_{i}-\cdot), \nabla_{x_{i}} P_{t}^{v \cup \{k \}|v} \right \rangle}^{2}
 \\ & \quad \quad
 \leq C \sum_{i \in v} \sum_{k \notin v} \xi_{i,k}^{N} \int P_{t}^{v} \norm{\int P_{t}^{v \cup \{k \}|v}(x*) b(x_{i}-x*) \nabla_{x_{i}} \log P_{t}^{v \cup \{k\}|v}}^{2}
 \\ & \quad \quad
 \leq C \sum_{i \in v} \sum_{k \notin v} \xi_{i,k}^{N} \int P_{t}^{v \cup \{k\}} \norm{\nabla_{x_{i}} \log P_{t}^{v \cup \{k \}|v}}^{2}
 \\ & \quad \quad
 \leq C \sum_{i \in v} \sum_{k \notin v} \xi_{i,k}^{N} \left(I_{t}^{v\cup \{k\}}-I_{t}^{v} \right).
\end{align*}
Finally, we have
\begin{align*}
\frac{d}{dt} I_{t}^{v} \leq C I_{t}^{v}+C(v)+C \mathcal{A} H_{t}^{v}+C \mathcal{A} I_{t}^{v}.
\end{align*}
We also know that $H$ satisfies (from \cite{lacker_graphe})
\begin{align*}
\frac{d}{dt} H_{t}^{v} \leq -\frac{1}{2} I_{t}^{v}+C(v)+C \mathcal{A} H_{t}^{v}.
\end{align*}
Therefore, for all $\alpha>0$, we have, if $Z_{t}^{v} \coloneqq I_{t}^{v}+\alpha H_{t}^{v}$
\begin{align*}
\frac{d}{dt} Z_{t}^{v} \leq \left(C-\frac{\alpha}{2}\right)I_{t}^{v}+C \mathcal{A} Z_{t}^{v}+C(v).
\end{align*}
Using Theorem 2.15 from \cite{lacker_graphe}, we get, if $\displaystyle \delta_{N}=\max_{(i,j) \in v^{2}} \xi_{i,j}^{N}$
\begin{align*}
I_{t}^{v} \leq \left(\delta_{N} |v|+1\right)\left( \sum_{(i,j)\in v^{2}} \left(\xi_{i,j}^{N} \right)^{2}+\delta_{N} \sum_{(i,j) \in v} \left(\xi^{T} \xi+\xi \xi^{T} \right)_{i,j}+\delta_{N}^{2} |v| \right).
\end{align*}
Note finally that, if $\xi_{i,j}^{N}=\frac{G_{N} \left(\frac{i}{N}, \frac{j}{N} \right)}{N}$, with $G_{N}$  graphon with values in $[0,1]$, we have
\begin{align*}
I_{t}^{v} \leq \frac{|v|^{2}}{N^{2}}.
\end{align*}

\section{Stability of graphon mean field systems} \label{sec_4}

 In this section, we let $u \in [0,1]$. We let $P_{[t]}^{1,u}=\text{Law}(X_{s}^{1,u}, s \in [0,t])$ and $P_{[t]}^{2,u}=\text{Law}(X_{s}^{2,u}, s \in [0,t])$. We will control the relative entropy $H_{t}^{u}$ between $P_{[t]}^{1,u} $ to $P_{[t]}^{2,u}$ in terms of a distance between $G_{1}$ and $G_{2}$. In order to do this, we will compute the time evolution of the relative entropy by using lemma 4.4 of \cite{lacker}:
\begin{align} \label{ineq_ent}
 H_{t}^{u}-H_{t}^{0} = C \int_{0}^{t} \int P_{s}^{1,u} \norm{\int_{0}^{1} G_{1}(u,v) \left \langle b(x_{u},\cdot),P_{s}^{1,v} \right \rangle dv-\int_{0}^{1} G_{2}(u,v) \left \langle b(x_{u},\cdot),P_{s}^{2,v} \right \rangle dv}^{2}.
\end{align}
Note that we can use this Lemma because the SDE satisfied by $X_{u}^{2} $ is well posed according to definition 2.1 of \cite{lacker} (see \cite{nonlinear_graphon} for instance)
. In a very similar fashion to the previous section, we will obtain a system of differential inequalities and prove the result by using a Gronwall type argument. In order to use this Gronwall argument we will rely on an a priori estimate on $H_{t}^{u}$, namely that it is bounded by a constant uniformly in $u$. Notice that, because $b$ is bounded, there exists $C>0$ such that 
\begin{align*}
\forall v \in [0,1], \forall i \in \{1,2\}, \forall x_{u} \in \mathbb{R}^{d}, \left \langle b(x_{u}, \cdot), P_{t}^{i,v} \right \rangle \leq C.
\end{align*}
Because $G_{1}$ and $G_{2}$ take values in $[0,1]$, inequality \eqref{ineq_ent} implies the boundedness of $H_{t}^{u}$ uniformly in $u$. We now use \eqref{ineq_ent} to obtain an inequality on $H_{t}^{u}$. In order to do that, let us denote, for $s \in [0,t]$,
\[A_{s}=\int P_{s}^{1,u} \norm{\int_{0}^{1} G_{1}(u,v) \left \langle b(x_{u},\cdot),P_{s}^{1,v} \right \rangle dv-\int_{0}^{1} G_{2}(u,v) \left \langle b(x_{u},\cdot),P_{s}^{2,v} \right \rangle dv}^{2}.\]
We now bound the term $A_{s}$ for all s.
Notice that we have, by a triangular inequality
\begin{align*}
A_{s}& \leq C  \int P_{s}^{1,u} \norm{ \int_{0}^{1} \left( G_{1}(u,v)-G_{2}(u,v)\right) \langle b(x_{u},\cdot), P_{s}^{1,v} \rangle dv}^{2} 
\\ &
\qquad + C \int P_{s}^{1,u} \norm{\int_{0}^{1} G_{2}(u,v) \langle b(x_{u},\cdot), P_{s}^{1,v}-P_{s}^{2,v} \rangle dv}^{2} 
\\ &
=A_{1,s}+A_{2,s}.
\end{align*}
We can bound the first term by using the fact that b is bounded. We have 
\begin{align*}
A_{1,s} & = C \int P_{s}^{1,u} \norm{ \int_{0}^{1} \left( G_{1}(u,v)-G_{2}(u,v)\right) \langle b(x_{u},\cdot), P_{s}^{1,v} \rangle dv }^{2}
\\ &
\leq  C \int P_{s}^{1,u} \left( \int_{0}^{1} \left \lvert G_{1}(u,v)-G_{2}(u,v) \right \rvert \norm{ \langle b(x_{u},\cdot), P_{s}^{1,v} \rangle } dv \right)^{2}
\\ &
\leq C  \int P_{s}^{1,u} \left( \int_{0}^{1} \lvert G_{1}(u,v)-G_{2}(u,v) \rvert dv \right)^{2}
\\ &
\leq C d(G_{1},G_{2})^{2}.
\end{align*}
Let us now bound the second term. We have
\begin{align*}
A_{2,s}&= C \int P_{s}^{1,u} \norm{ \int_{0}^{1} G_{2}(u,v) \langle b(x_{u},\cdot), P_{s}^{1,v}-P_{s}^{2,v} \rangle dv }^{2}
\\ &
\leq C \int P_{s}^{1,u} \left( \int_{0}^{1} G_{2}(u,v) \norm{ \langle b(x_{u},\cdot), P_{s}^{1,v}-P_{s}^{2,v} \rangle} dv \right)^{2}.
\end{align*}
Notice moreover that we have
\begin{align*}
\left \langle b(x_{u},\cdot), P_{t}^{i,v} \right \rangle& =\mathbb{E} \left(b(x_{u}, X_{t}^{v}) \right)
\\ &
=\left \langle b(x_{u},\cdot), P_{[t]}^{i,v} \right \rangle.
\end{align*}
By Pinsker's inequality, we have:
\begin{align*}
\lVert \langle b(x_{u},\cdot), P_{[t]}^{1,v}-P_{[t]}^{2,v} \rangle \rVert \leq \lVert b \rVert_{\infty}^{2} \sqrt{ H(P_{[t]}^{1,v}|P_{[t]}^{2,v})}.
\end{align*}
Therefore, we have, by the Cauchy Schwarz inequality (on the first line) and using the fact that $G_{2}$ is bounded (on the second line),
\begin{align*}
A_{2,s} &\leq C  \int P_{s}^{1,u} \left( \int_{0}^{1} G_{2}(u,v) dv \right) \int_{0}^{1} G_{2}(u,v) \norm{ \langle b(x_{u},\cdot), P_{s}^{1,v}-P_{s}^{2,v} \rangle}^{2} dv 
\\ &
\leq C  \int P_{s}^{1,u} \int_{0}^{1} G_{2}(u,v) \norm{\langle b(x_{u},\cdot), P_{[s]}^{1,v}-P_{[s]}^{2,v} \rangle}^{2} dv 
\\ &
\leq C  \int P_{s}^{1,u} \int_{0}^{1}G_{2}(u,v) H_{s}^{v} dv
\\ &
\leq C \int_{0}^{1} G_{2}(u,v) H_{s}^{v}dv.
\end{align*} 
We therefore get the following bound for the relative entropy 
\begin{align} \label{ineg_diff_ent}
H_{t}^{u} &\leq H_{t}^{0}+ C t d(G_{1},G_{2})^{2}+C \int_{0}^{t}\int_{0}^{1} G(u,v) H_{s}^{v}dv ds
\\ &
\leq H_{0}^{u}+C t d(G_{1},G_{2})^{2}+ \int_{0}^{t} \mathcal{A}H_{s}^{u} ds, 
\end{align}
where $\mathcal{A}$ is an operator from $\mathcal{L}^{\infty}([0,1])$ to $\mathcal{L}^{\infty}([0,1])$ defined as
\[\forall f \in \mathcal{L}^{\infty}([0,1]), \forall u \in [0,1], \mathcal{A} f(u)=\int_{0}^{1} G(u,v) f(v) dv,\]
where $H_{t}$ is seen as an element of $\mathcal{L}^{\infty}([0,1])$ such that $H_{t}(u):=H_{t}^{u}$. 
We now let 
\[F_{t}(u)=\int_{0}^{t} \mathcal{A} H_{s}^{u} ds.\]
Notice that \eqref{ineq_ent} implies the continuity of $t\to H_{t}^{v}$ for all $v$ so that, by the  boundedness of $H_{t}^{v}$ uniformly in $v$, $t\to \mathcal{A} H_{t}^{u}$ is continuous. We therefore have 
\[\forall t \in \mathbb{R}, \ \frac{d}{dt} F_{t}^{u}=\mathcal{A} H_{s}^{u}.\]
Notice moreover that, if $F \leq G$ then $\mathcal{A} F \leq \mathcal{A} G$ so that
\begin{align*}\frac{d}{dt} F_{t}^{u} & \leq \mathcal{A} H_{0}^{u}+Ct d(G_{1},G_{2})^{2} \mathcal{A} 1+\mathcal{A} F_{t}^{u}
\\ &
\leq \mathcal{A} H_{0}^{u}+Ct d(G_{1},G_{2})^{2}+\mathcal{A} F_{t}^{u},
\end{align*}
where $1$ denotes the function that is constant, equal to 1.
Let us now prove that, for all $t \geq 0$, $e^{t \mathcal{A}}$ is a positive operator. We have, if $G$ is a positive function:
\[\forall u \in \mathcal{P}_{f}([0,1]), \mathcal{A} G^{u}=C\int_{0}^{1} G(u,v) G(v) dv \geq 0.\]
It is then easy to see, by induction, that, for all $n \in \mathbb{N}$
\[\mathcal{A}^{n} G^{u} \geq 0.\]
Therefore, for all $t \geq 0$, 
\[e^{t \mathcal{A}} G^{u}=\sum_{n \in \mathbb{N}} \frac{t^{n}}{n!} \mathcal{A}^{n} G^{u} \geq 0.\]
Using this property, it is possible to use Gronwall's Lemma in the following way, because $e^{t \mathcal{A}}$ preserves the order 
\begin{align*}
\frac{d}{dt} e^{t \mathcal{A}} F_{T-t}& =e^{t \mathcal{A}} (\mathcal{A} F_{T-t}-\frac{d}{dt}F_{T-t})
\\ &
\geq -C t\ d(G_{1},G_{2})^{2}  e^{t \mathcal{A}} 1 -e^{t \mathcal{A}} \mathcal{A} H_{0}.
\end{align*} 
By integrating this inequality between the times $0$ and $T$, we get, for all $u \in [0,1]$, because $F_{0}=0$
\begin{align*}
F_{T}^{u} \leq C d(G_{1},G_{2})^{2} \int_{0}^{T} t e^{t \mathcal{A}} 1 dt+\int_{0}^{T} e^{t \mathcal{A}} \mathcal{A} H_{0} dt.
\end{align*}
We now bound the term $e^{t \mathcal{A}} 1$:
We have
\begin{align*}
\mathcal{A} 1^{u}=C \int_{0}^{1} G(u,v) dv \leq C.
\end{align*}
Therefore, by induction, it is possible to show that, for all $n \in \mathbb{N}$
 \begin{align*}
\mathcal{A}^{n} 1^{u} \leq C^{n}.
 \end{align*}
By using the Taylor expansion of the exponential operator, we get
\[\forall t \geq 0, \forall u \in [0,1], e^{t \mathcal{A}} 1^{u} \leq e^{C t}.\]
Plugging this bound on the one we have for $F_{T}^{u}$, and using the assumption on $H_{0}^{u}$ yields
\begin{align*}
F_{T}(u) \leq C T e^{C T} d(G_{1},G_{2})^{2}.
\end{align*}
Finally, we combine the previous estimate with the bound \eqref{ineg_diff_ent} to get the final bound 
\[H_{T}^{u} \leq C T e^{C T} d(G_{1},G_{2})^{2}.\]
\section{Stability of graphon mean field systems in Fisher information} \label{stabilite}
Let us now focus on proving bounds on Fisher information. The proof will be very similar to the previous sections. We can use Lemma 2.4 from \\cite{fisher}, all the integration by parts being justified, while $\nabla^{2} Q_{t}^{u}$ is bounded by a constant that doesn't depend on $u$ according to Lemma \ref{lemme_hessienne}. Therefore, we have, for all $u \in [0,1]$:
\begin{align*}
\frac{d}{dt} I_{t}^{u} & \leq C I_{t}^{u}+\ \int P_{t}^{1,u} \norm{\int_{0}^{1}  G_{1}(u,v) \langle b(x_{u},\cdot),P_{t}^{1,v} \rangle dv-\int_{0}^{1}  G_{2}(u,v) \langle b(x_{u},\cdot),P_{t}^{2,v} \rangle dv}^{2}
\\ &
+ \int P_{t}^{1,u} \norm{\nabla_{x_{u}} \left(\int_{0}^{1}  G_{1}(u,v) \langle b(x_{u},\cdot),P_{t}^{1,v} \rangle dv-\int_{0}^{1}  G_{2}(u,v) \langle b(x_{u},\cdot),P_{t}^{2,v} \rangle dv \right)}^{2}.
\end{align*}
Notice moreover that
\begin{align*}
\nabla_{x_{u}} \left(\int_{0}^{1}  G_{1}(u,v) \langle b(x_{u},\cdot),P_{t}^{1,v} \rangle dv\right)=\int_{0}^{1}  G_{1}(u,v) \langle \nabla b(x_{u},\cdot),P_{t}^{1,v} \rangle dv.
\end{align*}
Finally, we have
\begin{align*}
\frac{d}{dt} I_{t}^{u} & \leq C I_{t}^{u}+\int P_{t}^{1,u} \norm{\int_{0}^{1}  G_{1}(u,v) \langle b(x_{u},\cdot),P_{t}^{1,v} \rangle dv-\int_{0}^{1}  G_{2}(u,v) \langle b(x_{u},\cdot),P_{t}^{2,v} \rangle dv}^{2}
\\ &
+ \int P_{t}^{1,u} \norm{\int_{0}^{1}  G_{1}(u,v) \langle \nabla b(x_{u},\cdot),P_{t}^{1,v} \rangle dv-\int_{0}^{1}  G_{2}(u,v) \langle \nabla b(x_{u},\cdot),P_{t}^{2,v} \rangle dv}^{2}.
\end{align*}
By using very similar arguments as those of section 2, we have, because $b$ and $\nabla b$ are bounded
\begin{align} \label{eq_diff_fisher}
\frac{d}{dt} I_{t}^{u} \leq C I_{t}^{u}+\mathcal{A} H_{t}^{u}+C d(G_{1},G_{2})^{2}.
\end{align}
We also proved in section \ref{sec_4} that 
\begin{align*}
H_{t}^{u} \leq C t e^{Ct} d(G_{1},G_{2})^{2},
\end{align*}
so that we have, because $\mathcal{A} 1$ is bounded 
\[\mathcal{A} H_{t}^{u} \leq C t e^{Ct} d(G_{1},G_{2})^{2}.\]
Plugging this bound in \ref{eq_diff_fisher} yields
\begin{align*}
\frac{d}{dt} I_{t}^{u} \leq C I_{t}^{u}+C t e^{Ct}d(G_{1},G_{2})^{2}.
\end{align*}
Using Gronwall's lemma yields the existence of $C$ that does not depend on $u$ or $t$ such that
\[\forall t \in \mathbb{R}^{+}, \forall u \in [0,1], I_{t}^{u} \leq Ct e^{Ct} d(G_{1},G_{2})^{2}.\]
This concludes the proof.
\section{Strong existence of solutions to PDEs} \label{sol_forte}
Let us prove that, if ($\left(X_{u} \right)_{u \in [0,1]}$ is a solution of the equation, where $G$ is a bounded graphon
\begin{align*}
\left\{
            \begin{array}{ll}
              dX_{u}(t)=\int_{0}^{1} \int_{\mathbb{R}^{d}} b(X_{u}(s),x) G(u,v) P_{t}^{v}(dx)dvdt+  dB_{u}(t), \\ 
              P_{t}^{v}=\text{Law}(X_{u}(t)),
            \end{array}
            \right.
\end{align*}
then $P_{t}^{u}$ is a strong solution of the equation 
\begin{align*}
\partial_{t} P_{t}^{u}=  \Delta_{x_{u}} P_{t}^{u}-\nabla_{x_{u}} \cdot \left(\int_{0}^{1}  G(u,v) \langle b(x_{u}-\cdot), P_{t}^{v} \rangle dv \ P_{t}^{u} \right).
\end{align*}
Existence and uniqueness for the graphon mean field system has be done for instance in \cite{nonlinear_graphon}. While the regularity in $u$ of the system poses difficulties, if $u$ is fixed, the particle $X_u$ satisfies the above equation in the conventional sense of stochastic differential equations, according to the same paper. Since we will use it later, we will apply ito's formula to $\phi_{t} (X_{t}^{u})$, where $(s,x)\to \phi(s,x)$ is smooth, bounded with bounded derivatives
\begin{align*}
\left \langle \phi_{t}, P_{t}^{u} \right \rangle & = \int_{0}^{t} \left \langle \Delta \phi_{s}, P_{s}^{u} \right \rangle+\left \langle \nabla \phi_{s}, P_{s}^{u} \int_{0}^{1} G(u,v) \ b \ast P_{s}^{v} dv \right \rangle ds +\mathbb{E} \left(\int_{0}^{t}   \ \nabla \phi(X_{s}^{u}) dB_{s}^{u} \right)
\\ & \qquad +\int_{0}^{t} \left \langle \partial_{s} \phi_{s}, P_{s}^{u} \right \rangle ds
\\ &
=\int_{0}^{t}  \left \langle \Delta \phi_{s}, P_{s}^{u} \right \rangle+\left \langle \nabla \phi_{s}, P_{s}^{u} \int_{0}^{1} G(u,v) \ b \ast P_{s}^{v} dv \right \rangle ds +\int_{0}^{t} \left \langle \partial_{s} \phi_{s}, P_{s}^{u} \right \rangle ds.
\end{align*}
 Notice moreover that $x_{u} \to \int_{0}^{1} G(u,v) \ b \ast P_{t}^{v} dv$ is smooth. By denoting this function $b_{t}^{u}$, we therefore know that $P_{t}^{u}$ satisfies the following equation in a weak sense
\begin{align} \label{eq_faible}
\partial_{t} f_{t}=  \Delta f_{t}-\nabla \cdot \left(b_{t}^{u}  f_{t} \right).
\end{align}
Notice that, because $t \to b_{t}^{u}$ and $t \to \nabla \cdot b_{t}^{u}$ are continuous (by the dominated convergence theorem, because $t \to P_{t}^{v}$ is continuous for all v), if $b^{u}$ is fixed, there exists a strong solution $f_{t}$ to the equation above, such that $P_{0}^{u}=f_{0}$ (according for instance to theorem 11 of chapter 2 of \cite{friedman1983partial}). We will prove that $P_{t}^{u}=f_{t}$ for all $t$. Let us consider, for $0 \leq s \leq t$ and $y \in \mathbb{R}^{d}$ the equation
\begin{equation*}
dY_{s}^{t}=b_{t}^{u}(Y_{s}^{t})dt+  dB_{t}, \qquad Y_{s}^{s}=y.
\end{equation*}
If $G_{s}^{t}(y)$ is the law of $Y_{s}^{t}(y)$, let us denote $G_{s}^{t} \phi(y)=\left \langle G_{s}^{t}(y), \phi \right \rangle $. It is known that it satisfies the Backward Kolmogorov equation, namely that, for all $\phi \in \mathcal{C}_{c}^{\infty}$
\begin{align*}
\partial_{s} G_{s}^{t} \phi=-b_{s}^{u} \nabla G_{s}^{t} \phi-  \Delta G_{s}^{t} \phi.
\end{align*}
Therefore, we have, because $G_{s}^{t} \phi$ is smooth, bounded with bounded derivatives (see Theorem II.4.4 of \cite{Kunita1984} for a proof that the flow of a stochastic differential equation with $C^{k}$ coefficients is $C^{k}$ in space)
\begin{align*}
 \left \langle G_{s}^{t} \phi, P_{s}^{u} \right \rangle & = \left \langle G_{0}^{t} \phi, P_{0}^{u} \right \rangle + \int_{0}^{s} -\left \langle b_{v}^{u} \nabla G_{v}^{t} \phi+  \Delta G_{v}^{t} \phi, P_{v}^{u} \right \rangle dv + \int_{0}^{s} \left \langle   \Delta G_{v}^{t} \phi+b_{v}^{u} \nabla \phi, P_{v}^{u} \right \rangle dv \\ &
 =\left \langle G_{0}^{t} \phi, P_{0}^{u} \right \rangle.
\end{align*}
Taking $s=t$ yields, because $G_{t}^{t} \phi=\phi$
\begin{align*}
\left \langle \phi, P_{t}^{u} \right \rangle =\left \langle G_{0}^{t} \phi, P_{0}^{u} \right \rangle.
\end{align*}
Because $f_{t}$ is a weak solution of \eqref{eq_faible}, by using very similar arguments as before, we can prove that 
\begin{align*}
\left \langle \phi, f_{t} \right \rangle =\left \langle G_{0}^{t} \phi, f_{0} \right \rangle.
\end{align*}
Because $f_{0}=P_{0}^{u}$, we get that $\left \langle \phi, f_{t}-P_{t}^{u} \right \rangle=0 $ for all $\phi$, so that $f_{t}=P_{t}^{u}$ for all t.

\section{Bound on the hessian} \label{sec_7}
\begin{lemma} \label{lemme_hessienne}
Suppose that there exists $V$ such that $b=\nabla V$, and that $b$ and its derivatives are bounded. Suppose moreover that there exists $C_{0}$ such that 
\[\forall u \in [0,1], \forall x \in \mathbb{R}^{d}, \norm{\nabla^{2} \log P_{0}^{2,u}(x)} \leq C_{0}.\]
Then, there exists $C>0$ such that
\[\forall u \in [0,1], \forall x \in \mathbb{R}^{d}, \norm{\nabla^{2} \log P_{t}^{2,u}(x)} \leq C .\]
\end{lemma}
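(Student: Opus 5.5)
The key observation is that, for fixed $u$, the density $P_t^{2,u}$ solves a \emph{linear} Fokker--Planck equation once the family $(P_t^{2,v})_{v\in[0,1]}$ is frozen:
\[
\partial_t f_t = \Delta f_t - \nabla\cdot(b_t^u f_t), \qquad b_t^u(x) = \int_0^1 G_2(u,v)\,\langle b(x-\cdot),P_t^{2,v}\rangle\,dv .
\]
This is the strong formulation proved in Section \ref{sol_forte}. Since $b=\nabla V$, the drift is itself a gradient, $b_t^u=\nabla W_t^u$ with
\[
W_t^u(x)=\int_0^1 G_2(u,v)\langle V(x-\cdot),P_t^{2,v}\rangle\,dv .
\]
Because $G_2$ takes values in $[0,1]$ and $b$ and all its derivatives are bounded, all derivatives of $W_t^u$ of order $\ge 1$ are bounded uniformly in $u$, $t\le T$ and $x$. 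They are also continuous in $t$, by dominated convergence as in Section \ref{sol_forte}. I would therefore reduce the lemma to the following statement about linear equations.

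\emph{For a Fokker--Planck equation with gradient drift $\nabla W_t$, where $\nabla^k W_t$ is bounded for $k=1,2,3,4$ (uniformly in time), and with $\nabla^2\log f_0$ bounded, the Hessian $\nabla^2\log f_t$ stays bounded on $[0,T]$ with a constant depending only on these bounds, $C_0$ and $T$.}

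Uniformity in $u$ is then automatic, since none of the constants depends on $u$. This is exactly the type of estimate proved in \cite{bdd_hess} and used in \cite{fisher} for the McKean--Vlasov equation. The plan is to redo it with the graphon drift.

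To prove the reduced statement, I set $g_t=\log f_t$. It satisfies
\[
\partial_t g = \Delta g + \|\nabla g\|^2 - \nabla W\cdot\nabla g - \Delta W .
\]
Differentiating twice, $h=\nabla^2 g$ satisfies a parabolic equation with transport field $2\nabla g-\nabla W$. The source terms are the quadratic term $2h^2$, the term $-\nabla^2W\,h - h\,\nabla^2W$, terms involving $\nabla^3 W\cdot\nabla g$, and $\nabla^4 W$. I would apply a maximum principle to the extreme eigenvalues of $h$, or equivalently to $\sup_x \xi^T h\,\xi$ and $\inf_x\xi^T h\,\xi$ over unit vectors $\xi$.

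The $\nabla^3W\cdot\nabla g$ term is the delicate one, because $\nabla g$ is not bounded on $\mathbb{R}^d$. Under the Gaussian bounds of Hypothesis \ref{hyp-princ}, propagated in time by the boundedness of the drift, $\|\nabla g\|$ grows at most like $1+|x|$. I would handle this as in \cite{bdd_hess} and first prove a bound $\|\nabla \log f_t\|\le C(1+|x|)$ via the same maximum principle applied to $\nabla g$.

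The upper bound on $h$ does not follow directly from the maximum principle, because the source $+2h^2$ has the wrong sign and could blow up. For this step I would follow \cite{bdd_hess} and use the gradient structure via a Feynman--Kac / Bismut-type representation. Write $f_t=e^{-W_t}\cdot(\text{Schr\"odinger semigroup applied to } f_0e^{W_0})$. Then use the semi-convexity and semi-concavity propagation for Schrödinger-type heat flows with bounded potentials $\tfrac14\|\nabla W\|^2-\tfrac12\Delta W$, combined with the fact that log-concavity-type bounds $-C\le\nabla^2\log\le C$ are preserved by the heat flow up to time-dependent constants (Brascamp--Lieb / Prékopa for the lower bound, and a Li--Yau/Hamilton type argument for the upper bound).

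The main obstacle I expect is this two-sided propagation with an unbounded $\nabla\log f$ on $\mathbb{R}^d$, together with justifying the maximum principle at infinity. There I would rely on the regularity estimates (Section 6 of \cite{fisher}) that hold under Hypothesis \ref{hyp-princ}. On the torus, compactness makes these issues disappear.
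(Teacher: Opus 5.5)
There is a genuine gap, at the step you flagged yourself. You reduce the lemma to a bound on $\nabla^2\log f_t$ over $[0,T]$ that depends only on $C_0$, $T$ and bounds on $\nabla^k W$. That statement is false on $\mathbb{R}^d$. So is the principle you use for the upper side, namely that $\nabla^2\log f\le C$ is preserved by heat-type flows up to time-dependent constants. The Riccati term $2h^2$ really does blow up, at a time of order $1/C_0$.

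Here is a counterexample. Take $d=1$ and $b\equiv 0$ (allowed, with $V\equiv 0$), and set $\log P_0^{2,u}(x)=-\frac{x^2}{2}+L\sqrt{x^2+L^2/K^2}-\log Z_L$ with $L=L(u)$. Then $-1\le(\log P_0^{2,u})''\le K-1$ for every $L$, while
\[
(\log P_t^{2,u})''(0)=-\frac{1}{2t}+\frac{1}{4t^2}\,\mathrm{Var}(\pi),\qquad \pi(y)\propto P_0^{2,u}(y)\,e^{-y^2/(4t)} .
\]
Write $y=Lz$. Then $\pi\propto e^{L^2F(z)}$ with $F(z)=-\frac{s}{2}z^2+\sqrt{z^2+K^{-2}}$ and $s=1+\frac{1}{2t}$. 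As soon as $s<K$, i.e.\ $t>\frac{1}{2(K-1)}$, $F$ is maximal at $\pm z^\ast\neq0$. Hence $\mathrm{Var}(\pi)\sim L^2 (z^{\ast})^2$, and the Hessian at $0$ tends to $+\infty$ as $L\to\infty$.

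Letting $L(u)\to\infty$ as $u\to0$ gives a family that satisfies the hypotheses of the lemma but has no uniform bound at later times. So uniformity in $u$ is not automatic. Some information beyond $C_0$ must enter, for instance a horizon $T\lesssim 1/C_0$, or a uniform weak log-concavity (convexity at infinity) of $-\log P_0^{2,u}$. Feynman--Kac does not supply it.

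Your attributions are also reversed. Pr\'ekopa--Leindler and Brascamp--Lieb concern the upper side $\nabla^2\log f\le 0$ and need log-concave data. Li--Yau and Hamilton estimates concern the lower side, which is the side the maximum principle is suited for.

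The paper does not attempt a self-contained argument. It feeds the frozen linear equation into Corollary 2.5 of \cite{bdd_hess} with the profile $f_t(r)=-\frac{r}{t+\alpha}+\dots$, whose leading part has exactly the Riccati form. The example shows that whether such a profile is admissible for the initial data is a genuine extra condition; it cannot be derived from $C_0$ alone.

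Second, there is a fixable but real omission: $W_t^u$ depends on time. With the correct ground-state transform $f_t=e^{W_t/2}g_t$ (not $e^{-W_t}$), one gets
\[
\partial_t g=\Delta g-\Big(\tfrac14\norm{\nabla W_t}^2+\tfrac12\Delta W_t+\tfrac12\partial_t W_t\Big)g .
\]
Any Feynman--Kac or \cite{bdd_hess}-type argument therefore needs $\nabla\partial_t W_t^u$ to be Lipschitz uniformly in $u$. Your reduced statement lists only spatial derivatives of $W$.

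This is precisely the nontrivial hypothesis the paper checks. It differentiates $\nabla U_t^u=\int_0^1 G_2(u,v)\langle b(x_u-\cdot),P_t^{2,v}\rangle\,dv$ in time and substitutes the Fokker--Planck equation for $P_t^{2,v}$. It then integrates by parts so that only $b$, its derivatives and $b_t^v$ remain. You would need the same step.
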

\begin{proof}
Under our assumptions, we have
\begin{align*}
\partial_{t} P_{t}^{2,u}& =  \Delta_{x_{u}} P_{t}^{2,u}- \nabla_{x_{u}} \cdot \left(\int_{0}^{1}  G_{2}(u,v) \langle b(x_{u},\cdot),P_{t}^{2,v} \rangle dv P_{t}^{2,u} \right)
\\ &
=  \Delta_{x_{u}} P_{t}^{2,u}- \nabla_{x_{u}} \cdot \left(b_{t}^{u} P_{t}^{2,u} \right).
\end{align*}
Where $b_{t}^{2,u}$ is smooth, uniformly bounded in u, with derivatives uniformly bounded in u. Because we assumed that $b=\nabla U$ for some U, there exists $U_{t}^{u}$ such that $b_{t}^{u}=\nabla U_{t}^{u}$ for all $t$ and $u$. 
Therefore, $P_{t}^{2,u}$ satisfies the equation
\begin{align*}
\partial_{t} P_{t}^{u}=  \Delta_{x_{u}} P_{t}^{u}-\nabla \cdot \left(b_{t}^{u} \right) P_{t}^{2,u}-\nabla U_{t}^{u} \cdot \nabla P_{t}^{2,u}.
\end{align*}
This means that $P_{t}^{2,u}$ satisfies the same equation as $\psi_{t}$ in section 2 of \cite{bdd_hess}, so $-\log P_{T-t}^{2,u}$ satisfies the same equation as $\phi_{t}$ with $V_{t}^{u} \coloneqq \nabla \cdot b_{t}^{2,u}$. Using the same notations as \cite{bdd_hess}, we have $h_{u}=-\log P_{0}^{2,u}$, so that $\nabla^{2} h_{u}$ is uniformly bounded in u (and thus $\nabla h_{u}$ is uniformly lipschitz, with a lipschitz constant that does not depend on u). We also know that $\nabla U_{t}^{u}$ is uniformly lipschitz in u. Notice moreover that, because our coefficients may depend on time, we have to consider a generalized version of the operator $\mathcal{A}_{U}$, so we need to check that $\nabla \left[\mathcal{A}_{U}+V_{t}^{u}\right]=\nabla \left[ \frac{1}{2} | \nabla U_{t}{u} |^{2}-\frac{1}{2} \Delta U_{t}^{u}+V_{t}^{u} \right]-\nabla \left(\partial_{t} U_{t}^{u} \right)$ is uniformly lipschitz in u. Because $b_{t}^{u}$ and its derivatives are uniformly bounded in u, $\nabla \left[ \frac{1}{2} | \nabla U_{t}^{u} |^{2}-\frac{1}{2} \Delta U_{t}^{u}+V_{t}^{u} \right]$ is uniformly lipschitz in u. Moreover, we have, by integration by parts
\begin{align*} \nabla \left(\partial_{t} U_{t}^{u} \right) & =\partial_{t} \nabla U_{t}^{u}
\\ &
=\partial_{t} \int_{0}^{1} G_{2}(u,v) \left \langle b(x_{u}-\cdot), P_{t}^{2,v} \right \rangle dv
\\ &
= \int_{0}^{1} G_{2}(u,v) \left \langle b(x_{u}-\cdot), \Delta P_{t}^{2,v} \right \rangle dv - \int_{0}^{1} G_{2}(u,v) \left \langle b(x_{u}-\cdot), \nabla \cdot \left(b_{t}^{v} P_{t}^{v} \right) \right \rangle dv
\\ &
= \int_{0}^{1} G_{2}(u,v) \left \langle \Delta b(x_{u}-\cdot),  P_{t}^{2,v} \right \rangle dv + \int_{0}^{1} G_{2}(u,v) \left \langle \nabla b(x_{u}-\cdot), b_{t}^{v} P_{t}^{v} \right \rangle dv.
\end{align*}
Because $b$ and its derivatives are bounded, this identity entails that $\nabla \partial_{t} U_{t}^{u}$ is uniformly lipschitz in $x_{u}$, with a constant that does not depend on $u$.
We also have, because $\mathcal{A}+V_{t}^{u}$ is bounded, that  \[\kappa_{\mathcal{A}_{t}+V_{t}^{u}}=\frac{1}{r^{2}} \sup_{|x-y|=r} \langle \nabla \mathcal{A}_{t}(x)+V_{t}^{u}(x)-\mathcal{A}_{t}(y)-V_{t}^{u}(y),x-y\rangle \geq -\frac{C}{r}. \]
According to the first point of Section 2.2, we can take $f_{t}(r)=-\frac{r}{t+\alpha}+\frac{\beta_{1}}{2} (t+\alpha)-\frac{\beta_{2}}{t+\alpha}$ for a suitable choice of $\beta_{1}, \beta_{2}$ and $\alpha$ with which we can apply Corollay 2.5 of \cite{bdd_hess} with $ \sigma=\text{Id}$. This entails that $\nabla^{2} \log P_{T-t}^{u}$ is bounded for all $t,T$. Moreover, the bound proven in Corollary 2.5 only depends on the lipschitz constants of $\nabla U_{t}^{u}$, $\nabla V_{t}^{u}$ and $\nabla h^{u}$. The bound we get on $\nabla^{2} \log P_{t}^{2,u}$ is therefore uniform in u. This concludes the proof of the Lemma.
\end{proof}

\medskip

\noindent{\bf Acknowledgments.}\\
This work has been (partially) supported by the Project CONVIVIALITY ANR-23-CE40-0003 of the French National Research Agency. The author would also like to thank his PhD advisors Christophe Poquet and Arnaud Guillin for their suggestions on this present paper.

\bibliographystyle{abbrv}
\bibliography{biblio.bib}

\begin{thebibliography}{10}

\bibitem{bayraktar2022graphon}
E.~Bayraktar, S.~Chakraborty, and R.~Wu.
\newblock Graphon mean field systems.
\newblock {\em Ann. Appl. Probab.}, 33(5):3587--3619, 2023.

\bibitem{Bayraktar_2024}
E.~Bayraktar and D.~Kim.
\newblock Concentration of measure for graphon particle system.
\newblock {\em Advances in Applied Probability}, 56(4):1279–1306, Jan. 2024.

\bibitem{bayraktar2021graphon}
E.~Bayraktar and R.~Wu.
\newblock Stationarity and uniform in time convergence for the graphon particle
  system.
\newblock {\em Stochastic Processes Appl.}, 150:532--568, 2022.

\bibitem{Bet_2023}
G.~Bet, F.~Coppini, and F.~R. Nardi.
\newblock Weakly interacting oscillators on dense random graphs.
\newblock {\em Journal of Applied Probability}, 61(1):255–278, June 2023.

\bibitem{BHAMIDI20192174}
S.~Bhamidi, A.~Budhiraja, and R.~Wu.
\newblock Weakly interacting particle systems on inhomogeneous random graphs.
\newblock {\em Stochastic Processes and their Applications}, 129(6):2174--2206,
  2019.

\bibitem{Budhiraja2015}
A.~Budhiraja and R.~Wu.
\newblock Some fluctuation results for weakly interacting multi-type particle
  systems.
\newblock {\em Stochastic Processes Appl.}, 126(8):2253--2296, 2016.

\bibitem{bdd_hess}
L.-P. Chaintron, G.~Conforti, and K.~Eichinger.
\newblock Propagation of weak log-concavity along generalised heat flows via
  hamilton-jacobi equations, 2025.

\bibitem{Chaintron_2}
L.-P. Chaintron and A.~Diez.
\newblock Propagation of chaos: A review of models, methods and applications.
  i. models and methods.
\newblock {\em Kinetic and Related Models}, 15(6):895, 2022.

\bibitem{Chaintron_1}
L.-P. Chaintron and A.~Diez.
\newblock Propagation of chaos: A review of models, methods and applications.
  ii. applications.
\newblock {\em Kinetic and Related Models}, 15(6):1017, 2022.

\bibitem{Collet2014}
F.~Collet.
\newblock Macroscopic limit of a bipartite curie–weiss model: A dynamical
  approach.
\newblock {\em Journal of Statistical Physics}, 157(6):1301–1319, Sept. 2014.

\bibitem{Contucci2008}
P.~Contucci, I.~Gallo, and G.~Menconi.
\newblock Phase transitions in social science: two-population mean field
  theory.
\newblock {\em International Journal of Modern Physics B}, 22(14):2199–2212,
  June 2008.

\bibitem{nonlinear_graphon}
F.~Coppini, A.~De~Crescenzo, and H.~Pham.
\newblock Nonlinear graphon mean-field systems.
\newblock {\em Stochastic Processes Appl.}, 190:19, 2025.
\newblock Id/No 104728.

\bibitem{Delattre2016}
S.~Delattre, G.~Giacomin, and E.~Luçon.
\newblock A note on dynamical models on random graphs and fokker–planck
  equations.
\newblock {\em Journal of Statistical Physics}, 165(4):785–798, Nov. 2016.

\bibitem{friedman1983partial}
A.~Friedman.
\newblock {\em Partial Differential Equations of Parabolic Type}.
\newblock R.E. Krieger Publishing Company, 1983.

\bibitem{fisher}
J.~Grass, A.~Guillin, and C.~Poquet.
\newblock Propagation of chaos in fisher information, 2025.

\bibitem{article_diffusion}
J.~Grass, A.~Guillin, and C.~Poquet.
\newblock Sharp propagation of chaos for mckean-vlasov equation with non
  constant diffusion coefficient.
\newblock {\em Electronic Communications in Probability}, 30:1--12, 2025.

\bibitem{Jabin_2024}
P.~Jabin, D.~Poyato, and J.~Soler.
\newblock Mean‐field limit of non‐exchangeable systems.
\newblock {\em Communications on Pure and Applied Mathematics},
  78(4):651–741, Nov. 2024.

\bibitem{Jabin_Wang}
P.-E. Jabin and Z.~Wang.
\newblock Quantitative estimates of propagation of chaos for stochastic systems
  with ${W}^{-1,\infty }$ kernels.
\newblock {\em Inventiones mathematicae}, 214(1):523–591, July 2018.

\bibitem{Kunita1984}
H.~Kunita.
\newblock Stochastic differential equations and stochastic flows of
  diffeomorphisms.
\newblock In P.~L. Hennequin, editor, {\em {\'E}cole d'{\'E}t{\'e} de
  Probabilit{\'e}s de Saint-Flour XII - 1982}, pages 143--303, Berlin,
  Heidelberg, 1984. Springer Berlin Heidelberg.

\bibitem{lacker}
D.~Lacker.
\newblock Hierarchies, entropy, and quantitative propagation of chaos for mean
  field diffusions.
\newblock {\em Probability and Mathematical Physics}, 4(2):377--432, May 2023.

\bibitem{lacker_uniforme}
D.~Lacker and L.~Le~Flem.
\newblock Sharp uniform-in-time propagation of chaos.
\newblock {\em Probability Theory and Related Fields}, 187(1-2):443--480, 2023.

\bibitem{lacker_graphe}
D.~Lacker, L.~C. Yeung, and F.~Zhou.
\newblock Quantitative propagation of chaos for non-exchangeable diffusions via
  first-passage percolation, 2024.

\bibitem{lovasz2012}
L.~Lov{\'a}sz.
\newblock {\em Large Networks and Graph Limits}.
\newblock American Mathematical Society colloquium publications. American
  Mathematical Society, 2012.

\bibitem{lovasz2004}
L.~Lov{\'a}sz and B.~Szegedy.
\newblock Limits of dense graph sequences.
\newblock {\em J. Comb. Theory, Ser. B}, 96(6):933--957, 2006.

\bibitem{Muntean2014}
A.~Muntean and F.~Toschi.
\newblock {\em Collective Dynamics from Bacteria to Crowds: An Excursion
  Through Modeling, Analysis and Simulation}, volume 553.
\newblock 01 2014.

\bibitem{Nadtochiy2019}
S.~Nadtochiy and M.~Shkolnikov.
\newblock Mean field systems on networks, with singular interaction through
  hitting times.
\newblock {\em Ann. Probab.}, 48(3):1520--1556, 2020.

\bibitem{Naldi2010}
G.~Naldi, L.~Pareschi, and G.~Toscani, editors.
\newblock {\em Mathematical modeling of collective behavior in socio-economic
  and life sciences}.
\newblock Model. Simul. Sci. Eng. Technol. Boston, MA: Birkh{\"a}user, 2010.

\bibitem{Oliveira2019}
R.~I. Oliveira and G.~H. Reis.
\newblock Interacting diffusions on random graphs with diverging average
  degrees: Hydrodynamics and large deviations.
\newblock {\em Journal of Statistical Physics}, 176(5):1057–1087, July 2019.

\bibitem{Sirignano2019}
J.~Sirignano and K.~Spiliopoulos.
\newblock Mean field analysis of neural networks: a law of large numbers.
\newblock {\em SIAM J. Appl. Math.}, 80(2):725--752, 2020.

\bibitem{Vil25}
C.~Villani.
\newblock Fisher information in kinetic theory, 2025.
\newblock https://arxiv.org/abs/2501.00925.

\bibitem{Vortex}
S.~Wang.
\newblock Sharp local propagation of chaos for mean field particles with
  $w^{-1,\infty}$ kernels.
\newblock {\em Journal of Functional Analysis}, page 111240, Oct. 2025.

\end{thebibliography}

\end{document}